\newtheorem{theorem}{Theorem}
\newtheorem{proposition}[theorem]{Proposition}
\newtheorem{remark}{Remark}
\newtheorem{lemma}{Lemma}
\newtheorem{corollary}{Corollary}
\DeclareMathOperator{\supp}{supp}
\begin{document}

\title[On spectral measures and convergence rates in von Neumann's Ergodic Theorem]{On spectral measures and convergence rates in von Neumann's Ergodic Theorem}

\author*[1]{ \sur{Moacir Aloisio}}

\author[2]{\sur{Silas L. de Carvalho}}

\author[3]{\sur{César R. de Oliveira}}

\author[4]{\sur{Edson Souza}}

\affil*[1]{\orgdiv{Corresponding author. Email: ec.moacir@gmail.com, Departamento de Matemática e Estatística}, \orgname{UFVJM}, \orgaddress{ \city{Diamantina}, \postcode{39100-000}, \state{MG}, \country{Brazil}}}

\affil[2]{\orgdiv{Email: silas@mat.ufmg.br, Departamento de Matemática}, \orgname{UFMG}, \orgaddress{ \city{Belo Horizonte}, \postcode{30161-970}, \state{MG}, \country{Brazil}}}

\affil[3]{\orgdiv{Email: oliveira@dm.ufscar.br, Departamento de Matemática}, \orgname{UFSCar}, \orgaddress{ \city{São Carlos}, \postcode{13560-970}, \state{SP}, \country{Brazil}}}

\affil[4]{\orgdiv{Email: edsonilustre@yahoo.com.br, Departamento de Matemática}, \orgname{UEA}, \orgaddress{\city{Manaus}, \postcode{369067-005}, \state{AM}, \country{Brazil}}}


\abstract{We show that the power-law decay exponents in von Neumann's Ergodic Theorem (for discrete systems) are the pointwise scaling exponents of a spectral measure at the spectral value~$1$. In this work we also prove that, under an assumption of weak convergence, in the absence of a spectral gap, the convergence rates of the time-average in von Neumann's Ergodic Theorem depend on sequences of time going to infinity.}

\keywords{von Neumann's Ergodic Theorem, spectral measures, unitary dynamics.\\ {\bf  AMS classification codes}: 37A30 (primary), 81Q10 (secondary).}

\maketitle

\section{Introduction}\label{sec1}

\subsection{Contextualization and main results} 

\noindent Let $U$ be a unitary operator on a separable complex Hilbert space $\mathcal{H}$ and let $P^U$ be its resolution of the identity (recall details from definition of $P^U$ in Subsection \ref{secPU}). For each $\psi \in \mathcal{H}$, set 
\begin{equation*}
\psi^* := P^U(\{1\}) \psi.
\end{equation*} 
Note that $P^U(\{1\})$ is the orthogonal projection onto the closed subspace $I(U):= \{\varphi \mid U\varphi= \varphi\}$. 

Now, recall the following classical result.  
  
\begin{theorem}[von Neumann's Ergodic Theorem]  Let $U$ be a unitary operator on  $\mathcal{H}$. Then, for each $\psi \in \mathcal{H}$, 
\begin{eqnarray}\label{eq002}
\lim_{K \to \infty} {\frac{1}{K}} \displaystyle\sum_{j=0}^{K-1} U^j\psi = \psi^*.
\end{eqnarray}
\end{theorem}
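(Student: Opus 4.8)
The plan is to deduce \eqref{eq002} from the spectral theorem for the unitary operator $U$ together with the dominated convergence theorem. Writing the spectral representation $U = \int_{\mathbb{T}} z\, dP^U(z)$, where $\mathbb{T} := \{z \in \mathbb{C} : |z| = 1\}$, the Cesàro averages
\[
A_K := \frac{1}{K}\sum_{j=0}^{K-1} U^j, \qquad K \ge 1,
\]
are simply $A_K = D_K(U)$, where $D_K(z) := \frac{1}{K}\sum_{j=0}^{K-1} z^j$ is a polynomial (so no functional calculus beyond powers of $U$ is needed to make sense of $A_K$); on the other hand $\psi^* = P^U(\{1\})\psi = \chi_{\{1\}}(U)\psi$, where $\chi_{\{1\}}$ is the indicator of the singleton $\{1\} \subset \mathbb{T}$, and this does use the Borel functional calculus.

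The elementary observation driving the proof is the behaviour of the kernel $D_K$ on $\mathbb{T}$: one has $D_K(1) = 1$ for every $K$, while for $z \in \mathbb{T}$ with $z \ne 1$,
\[
D_K(z) = \frac{1}{K}\cdot\frac{z^K - 1}{z - 1}, \qquad\text{so}\qquad |D_K(z)| \le \frac{2}{K\,|z - 1|} \xrightarrow[K\to\infty]{} 0 ,
\]
and in all cases $|D_K(z)| \le \frac{1}{K}\sum_{j=0}^{K-1} |z|^j = 1$. Hence $D_K \to \chi_{\{1\}}$ pointwise on $\mathbb{T}$, with $|D_K| \le 1$ uniformly. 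Now fix $\psi \in \mathcal{H}$ and let $\mu_\psi$ be the finite positive spectral measure $\mu_\psi(\Lambda) := \langle \psi, P^U(\Lambda)\psi\rangle$, with $\mu_\psi(\mathbb{T}) = \|\psi\|^2$. By the functional calculus,
\[
\bigl\| A_K\psi - \psi^* \bigr\|^2 = \bigl\| (D_K - \chi_{\{1\}})(U)\,\psi \bigr\|^2 = \int_{\mathbb{T}} \bigl| D_K(z) - \chi_{\{1\}}(z) \bigr|^2 \, d\mu_\psi(z).
\]
The integrand is bounded by the constant $4 \in L^1(\mu_\psi)$ and tends to $0$ pointwise, so the dominated convergence theorem gives $\|A_K\psi - \psi^*\| \to 0$, which is exactly \eqref{eq002}.

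I do not expect a genuine obstacle here: the only non-elementary ingredient is the spectral theorem itself (existence of $P^U$ and the identity $\|f(U)\psi\|^2 = \int |f|^2\,d\mu_\psi$), which the paper takes as background in Subsection~\ref{secPU}. The one point to handle with a little care is the domination near $z = 1$, where the bound $2/(K|z-1|)$ degenerates; this is precisely why the crude bound $|D_K| \le 1$ on all of $\mathbb{T}$ is invoked. An alternative, spectral-free route is von Neumann's original decomposition $\mathcal{H} = I(U) \oplus \overline{\operatorname{ran}(U - I)}$: on $I(U)$ one has $A_K\varphi = \varphi = \varphi^*$; on $\operatorname{ran}(U - I)$ one computes $A_K(U-I)\varphi = \frac{1}{K}(U^K - I)\varphi \to 0$; this extends to $\overline{\operatorname{ran}(U-I)}$ because $\|A_K\| \le 1$; and $I(U)^\perp = \overline{\operatorname{ran}(U-I)}$ follows from $U$ being unitary (so $U\varphi = \varphi \iff U^*\varphi = \varphi$). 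Since the rest of the paper is phrased through $P^U$, the spectral proof above is the natural one to record.
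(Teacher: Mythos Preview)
Your proof is correct. Note, however, that the paper does not actually supply its own proof of this statement: von Neumann's Ergodic Theorem is quoted as a classical result in the introduction, and the appendix only proves Lemma~\ref{mainlemma} (von Neumann's Lemma) and Theorem~\ref{spectralgap}. That said, your spectral-theorem-plus-DCT argument is precisely what one obtains by combining Lemma~\ref{mainlemma} with dominated convergence: the identity there is your functional-calculus expression $\|A_K\psi - \psi^*\|^2 = \int |D_K - \chi_{\{1\}}|^2\,d\mu_\psi$ rewritten using $\mu_{\psi-\psi^*}^U$ (for which $\{1\}$ is a null set, so the $\chi_{\{1\}}$ term disappears). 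Thus your argument is entirely in the spirit of the paper's toolkit, and the uniform bound $|D_K|\le 1$ that you invoke to dominate near $z=1$ is exactly the right fix for the degeneracy of $2/(K|z-1|)$ there. The alternative decomposition $\mathcal{H}=I(U)\oplus\overline{\operatorname{ran}(U-I)}$ you sketch is also correct and is the standard spectral-free proof; since the paper works throughout via $P^U$ and spectral measures, your first route is the more natural one to record.
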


The study of convergence rates for ergodic theorems is a long-established subject in spectral theory and dynamical systems (see \cite{Ben,CarvalhoEandDS,Kachurovskii01,Kachurovskii1,Kachurovskii02,Kachurovskii2,Kachurovskii03,Podvigin,von} and references therein).  In the particular case of the von Neumann's Ergodic Theorem (vNET), some authors have obtained (power-law) convergence rates for the time-average in relation~\eqref{eq002} in  the absence of a spectral gap at $z=1$ (that is, in case $z=1$ is not an isolated point of the spectrum of $U$); most of these works are motivated by possible applications to Koopman operators  (see \cite{Kachurovskii01,Kachurovskii1,Kachurovskii02,Kachurovskii2,Kachurovskii03} and references therein; see also \cite{Ben} for a discussion involving continuous dynamical systems). 

On the other hand, if there is a spectral gap at $z=1$, it is well  known that the convergence rates for the time-average in relation~\eqref{eq002} are uniform in $\psi$ and at least of order $K^{-1}$; in what follows, $\sigma(U)$ stands for the spectrum of~$U$, a subset of $\partial \mathbb{D} := \{z \in \mathbb{C} \mid |z| = 1\}$.

\begin{theorem}[von Neumann's Ergodic Theorem with a spectral gap at $z=1$]\label{spectralgap} Let $U$ be a unitary operator on  $\mathcal{H}$. If there exists $0<\gamma < \pi$ such that $\sigma(U) \subset \{1\} \cup \{e^{i\theta} \mid \theta \in (-\pi,-\gamma] \cup (\gamma,\pi] \}$, then for every $K \in\mathbb{N}$,
\begin{eqnarray*}
\Bigg\|{\frac{1}{K}} \displaystyle\sum_{j=0}^{K-1} U^j - P^U(\{1\}) \Bigg\|   \leq \frac{4}{\gamma K}.
\end{eqnarray*} 
\end{theorem}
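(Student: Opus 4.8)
The plan is to diagonalize the ergodic average with the spectral theorem and reduce the operator inequality to a scalar estimate on $\sigma(U)$. Writing $U=\int_{\partial\mathbb D}z\,dP^U(z)$, the functional calculus gives, for every $K\in\mathbb N$,
\begin{equation*}
\frac1K\sum_{j=0}^{K-1}U^j-P^U(\{1\})=\int_{\partial\mathbb D}g_K(z)\,dP^U(z),\qquad g_K(z):=\frac1K\sum_{j=0}^{K-1}z^j-\chi_{\{1\}}(z),
\end{equation*}
with $\chi_{\{1\}}$ the characteristic function of $\{1\}$; since $P^U$ is supported on $\sigma(U)$, the operator norm on the left is bounded by $\sup_{z\in\sigma(U)}|g_K(z)|$, so everything reduces to controlling $|g_K|$ on the spectrum.

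Next I would observe that $g_K(1)=1-1=0$, so only $\sigma(U)\setminus\{1\}$ contributes, and that for $z\in\partial\mathbb D$ with $z\ne1$ the geometric sum gives $g_K(z)=\frac1K\,\frac{z^K-1}{z-1}$, hence $|g_K(z)|\le\frac{2}{K|z-1|}$. The hypothesis then enters through a uniform lower bound on $|z-1|$ away from $1$: for $z=e^{i\theta}$ with $|\theta|\in(\gamma,\pi]$ one has $|z-1|=2|\sin(\theta/2)|$ and $|\theta/2|\le\pi/2$, so Jordan's inequality $\sin x\ge\frac{2}{\pi}x$ on $[0,\pi/2]$ yields $|z-1|\ge\frac{2|\theta|}{\pi}\ge\frac{2\gamma}{\pi}$.

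Putting these together, $\sup_{z\in\sigma(U)\setminus\{1\}}|g_K(z)|\le\frac{2}{K}\cdot\frac{\pi}{2\gamma}=\frac{\pi}{\gamma K}\le\frac{4}{\gamma K}$ because $\pi<4$, which is the asserted bound (the constant $4$ being a convenient rather than optimal choice). I do not expect a genuine obstacle here: the only slightly delicate point is the passage from a spectral gap of width $\gamma$ to the chord estimate $|z-1|\ge 2\gamma/\pi$, which is exactly the elementary trigonometric inequality above; alternatively one may simply use that $\theta\mapsto 2\sin(\theta/2)$ is increasing on $[\gamma,\pi]$ and estimate its minimum at $\theta=\gamma$.
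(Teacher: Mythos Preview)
Your proof is correct and follows essentially the same route as the paper's: both reduce the operator norm to the supremum of the spectral multiplier $\frac{1}{K}\big|\frac{z^K-1}{z-1}\big|$ on $\sigma(U)\setminus\{1\}$, then bound the numerator trivially and the denominator via an elementary lower bound on $|\sin(\gamma/2)|$. The only cosmetic differences are that the paper passes through the per-vector identity of Lemma~\ref{mainlemma} before taking the sup over $\psi$, and uses the slightly cruder inequality $\sin(\gamma/2)\ge\gamma/4$ in place of your Jordan bound (which in fact gives the sharper constant $\pi/(\gamma K)$ before you round up to $4$).
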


Although the proof of Theorem \ref{spectralgap} is a consequence of Lemma \ref{mainlemma} below, for the convenience of the reader, we present it in Appendix~\ref{appendix}. In what follows, $\mu_{\psi - \psi^*}^U$ stands for the spectral measure of $U$ with respect to $\psi - \psi^*$ on $\partial \mathbb{D}$. 

\begin{lemma}[von Neumann's Lemma]\label{mainlemma} Let  $U$ be a unitary operator on  $\mathcal{H}$. Then, for each $\psi \in \mathcal{H}$ and each $K \in\mathbb{N}$,
\begin{eqnarray}\label{eq03}
\Bigg\|{\frac{1}{K}} \displaystyle\sum_{j=0}^{K-1} U^j\psi -  \psi^* \Bigg\|^2 &=&\frac{1}{K^2}   \int_{ \partial \mathbb{D} } \left| \frac{z^K -1}{z-1} \right|^2 d\mu_{\psi - \psi^*}^U(z).
\end{eqnarray}
\end{lemma}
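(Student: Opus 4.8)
The plan is to express both sides of~\eqref{eq03} through the functional calculus for the unitary operator $U$. First I would observe that the partial sum telescopes: on the spectral side, for $z \in \partial\mathbb{D}$ with $z \ne 1$ one has $\sum_{j=0}^{K-1} z^j = \frac{z^K-1}{z-1}$, while at $z=1$ the sum equals $K$. By the spectral theorem, $\frac{1}{K}\sum_{j=0}^{K-1}U^j\psi = \int_{\partial\mathbb{D}} f_K(z)\, dP^U(z)\psi$, where $f_K(z) = \frac{1}{K}\sum_{j=0}^{K-1} z^j$; this function is bounded by $1$ on $\partial\mathbb{D}$, so the integral makes sense.

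Next I would handle the subtraction of $\psi^* = P^U(\{1\})\psi$. Writing $\psi = \psi^* + (\psi-\psi^*)$ and using that $P^U(\{1\})$ commutes with every $U^j$, one gets $\frac{1}{K}\sum_{j=0}^{K-1}U^j\psi^* = \psi^*$ (since $U\psi^*=\psi^*$), so $\frac{1}{K}\sum_{j=0}^{K-1}U^j\psi - \psi^* = \frac{1}{K}\sum_{j=0}^{K-1}U^j(\psi-\psi^*)$. Now $\psi-\psi^*$ lies in the range of $P^U(\partial\mathbb{D}\setminus\{1\})$, i.e. $P^U(\{1\})(\psi-\psi^*)=0$, so the spectral measure $\mu^U_{\psi-\psi^*}$ gives no mass to $\{1\}$ and we may freely replace $f_K$ by $\frac{z^K-1}{K(z-1)}$ on the support of that measure without worrying about the point $z=1$.

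Then I would apply the basic identity of the functional calculus: for a bounded Borel function $g$ on $\partial\mathbb{D}$ and any vector $\varphi$,
\begin{equation*}
\Bigl\| \int_{\partial\mathbb{D}} g(z)\, dP^U(z)\varphi \Bigr\|^2 = \int_{\partial\mathbb{D}} |g(z)|^2 \, d\mu^U_{\varphi}(z).
\end{equation*}
Taking $\varphi = \psi-\psi^*$ and $g(z) = \frac{z^K-1}{K(z-1)}$ (defined arbitrarily, say as $1$, at $z=1$, which is irrelevant since $\mu^U_{\psi-\psi^*}(\{1\})=0$) yields exactly the right-hand side of~\eqref{eq03}, namely $\frac{1}{K^2}\int_{\partial\mathbb{D}} \bigl|\frac{z^K-1}{z-1}\bigr|^2\, d\mu^U_{\psi-\psi^*}(z)$.

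I do not expect a serious obstacle here; the only point requiring a little care is the bookkeeping at the spectral value $z=1$ — one must make sure that $f_K(1)=1 \ne \lim_{z\to 1}\frac{z^K-1}{K(z-1)}=1$ (these actually agree, but more importantly) that the measure $\mu^U_{\psi-\psi^*}$ assigns zero mass to $\{1\}$, so that the discrepancy between the two expressions for $f_K$ away from and at $1$ is immaterial. Establishing $\mu^U_{\psi-\psi^*}(\{1\})=0$ from $P^U(\{1\})(\psi-\psi^*)=0$ is the one step I would state explicitly, after which the computation is a direct application of the spectral theorem.
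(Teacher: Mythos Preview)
Your proposal is correct and follows essentially the same argument as the paper's proof: reduce to the vector $\psi-\psi^*$ via $U\psi^*=\psi^*$, note that $\mu^U_{\psi-\psi^*}(\{1\})=0$, and then apply the spectral theorem with the geometric-sum identity $\sum_{j=0}^{K-1}z^j=\frac{z^K-1}{z-1}$ on $\partial\mathbb{D}\setminus\{1\}$. The paper's write-up is slightly terser (it simply asserts $\mu^U_{\psi-\psi^*}(\{1\})=0$ and the first equality $\|\frac{1}{K}\sum U^j\psi-\psi^*\|^2=\|\frac{1}{K}\sum U^j(\psi-\psi^*)\|^2$ without the decomposition you spell out), but the content is identical.
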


Lemma~\ref{mainlemma} is a  consequence of the Spectral Theorem; its proof is also presented in Appendix~\ref{appendix}. 

\begin{remark}\label{russos}{\rm  Relation \eqref{eq03} indicates that the convergence rates of the time-average in~\eqref{eq002} depend only on  local scale properties of $\mu_{\psi - \psi^*}^U$  at $z=1$. In this context, this relation arises from an argument due to von Neumann, presented in \cite{von} to prove the Quasi-Ergodic Hypothesis, and it was used more recently by Kachurovskii, Podvigin and Sedalishchev~\cite{Kachurovskii01,Kachurovskii1,Kachurovskii2} to relate the local scale properties of $\mu_{\psi - \psi^*}^U$  at $z=1$ to the power-law convergence rates in~\eqref{eq002} (see Theorem \ref{russiantheorem} ahead).}
\end{remark}

Next, we recall the result due to Kachurovskii \cite{Kachurovskii01}, mentioned in Remark~\ref{russos}, that relates the (power-law) convergence rates in vNET  to the local scale properties of $\mu_{\psi - \psi^*}^U$ at $z=1$, in case there is no spectral gap.

In what follows, for each $0<\epsilon \leq \pi$, set
\[A_\epsilon:= \big\{e^{ i \theta} \mid -\epsilon < \theta \leq \epsilon \big\}.\]

\begin{theorem}[Theorem 3. in \cite{Kachurovskii01}]\label{russiantheorem} Let $U$ be a unitary operator on $\mathcal{H}$, $\psi \in \mathcal{H}$ and $0<\alpha<2$.
\begin{enumerate} 
\item[{\rm i)}]  If there exists $C_\psi>0$ so that, for every $0<\epsilon \leq \pi$, $\mu_{\psi- \psi^*}^U(A_\epsilon)  \leq C_\psi \epsilon^{\alpha}$, then there exists ${\widetilde{C}_\psi}>0$ such that, for every $K \in\mathbb{N}$,
\begin{eqnarray*}
\bigg\| {\frac{1}{K}} \displaystyle\sum_{j=0}^{K-1} U^j\psi -  \psi^* \bigg\|^2 \leq  \frac{{\widetilde{C}_\psi}}{K^{\alpha}}\,.
\end{eqnarray*}
 \item[{\rm ii)}] Conversely, if there exists $C_\psi >0$ such that for every $K \in\mathbb{N}$,
\begin{eqnarray*}
\bigg\|{\frac{1}{K}} \displaystyle\sum_{j=0}^{K-1} U^j\psi - \psi^* \bigg\|^2  \leq \frac{{C_\psi}}{K^{\alpha}},
\end{eqnarray*}
then there exists ${\widetilde{C}_\psi}>0$ so that for every $0<\epsilon \leq \pi$, $\mu_{\psi- \psi^*}^U(A_\epsilon)  \leq {\widetilde{C}_\psi} \epsilon^{\alpha}$.
\end{enumerate} 
\end{theorem}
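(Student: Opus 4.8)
The plan is to obtain both implications directly from von Neumann's Lemma (Lemma~\ref{mainlemma}), by feeding into \eqref{eq03} elementary two-sided estimates for the kernel appearing there. Writing $z=e^{i\theta}$ with $\theta\in(-\pi,\pi]$ and abbreviating $\mu:=\mu_{\psi-\psi^*}^U$, one has $|z^K-1|=2|\sin(K\theta/2)|$ and $|z-1|=2|\sin(\theta/2)|$, so the right-hand side of \eqref{eq03} equals $\int_{\partial\mathbb{D}}F_K(\theta)\,d\mu(\theta)$ with
\[
F_K(\theta):=\frac{1}{K^2}\,\frac{\sin^2(K\theta/2)}{\sin^2(\theta/2)}.
\]
Since $\psi-\psi^*$ is orthogonal to $I(U)=\operatorname{ran}P^U(\{1\})$, the measure $\mu$ has no atom at $z=1$, so the value of $F_K$ at $\theta=0$ is irrelevant. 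From $|1+z+\cdots+z^{K-1}|\le K$ we get $F_K\le 1$ everywhere; from Jordan's inequality $|\sin(\theta/2)|\ge|\theta|/\pi$ (valid for $|\theta|\le\pi$) we get $F_K(\theta)\le\pi^2/(K^2\theta^2)$; and combining $|\sin(K\theta/2)|\ge|K\theta|/\pi$ (valid for $|\theta|\le\pi/K$) with $|\sin(\theta/2)|\le|\theta|/2$ we get $F_K(\theta)\ge 4/\pi^2$ for $|\theta|\le\pi/K$. These three inequalities are the only analytic input.

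For part~(ii), assume $\big\|\frac1K\sum_{j<K}U^j\psi-\psi^*\big\|^2\le C_\psi K^{-\alpha}$. Restricting the integral in \eqref{eq03} to $A_{1/K}$ (which is contained in $A_{\pi/K}$, where $F_K\ge 4/\pi^2$) gives $\frac{4}{\pi^2}\mu(A_{1/K})\le C_\psi K^{-\alpha}$ for every $K\in\mathbb{N}$. Now fix $0<\epsilon\le\pi$: if $\epsilon\le\frac12$, take $K=\lfloor 1/\epsilon\rfloor$, so that $A_\epsilon\subseteq A_{1/K}$ and $K\ge\frac{1}{2\epsilon}$, hence $\mu(A_\epsilon)\le\frac{\pi^2 C_\psi}{4}(2\epsilon)^\alpha$; if $\frac12<\epsilon\le\pi$, simply use $\mu(A_\epsilon)\le\|\psi-\psi^*\|^2\le 2^\alpha\|\psi-\psi^*\|^2\,\epsilon^\alpha$. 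Taking $\widetilde C_\psi$ to be the larger of the two resulting constants completes this direction.

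For part~(i), assume $\mu(A_\epsilon)\le C_\psi\epsilon^\alpha$ for all $0<\epsilon\le\pi$ (note this forces $\|\psi-\psi^*\|^2=\mu(\partial\mathbb{D})\le C_\psi\pi^\alpha$). Split the integral in \eqref{eq03} as $\int_{A_{1/K}}+\int_{\partial\mathbb{D}\setminus A_{1/K}}$. On $A_{1/K}$ use $F_K\le 1$ to bound the first piece by $\mu(A_{1/K})\le C_\psi K^{-\alpha}$. For the second piece, use $F_K(\theta)\le\pi^2/(K^2\theta^2)$ and decompose $\partial\mathbb{D}\setminus A_{1/K}$ into the dyadic shells $A_{2^{n+1}/K}\setminus A_{2^n/K}$, $n=0,1,\dots$ (finitely many, up to the first $n$ with $2^n/K\ge\pi$); on the $n$-th shell $\theta^{-2}\le 4^{-n}K^2$ and the $\mu$-mass of the shell is at most $\mu(A_{2^{n+1}/K})\le C_\psi 2^{(n+1)\alpha}K^{-\alpha}$, so the $n$-th term contributes at most a constant times $C_\psi 2^{\alpha}\,2^{n(\alpha-2)}K^{-\alpha}$. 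Summing the geometric series $\sum_{n\ge0}2^{n(\alpha-2)}$ — which converges precisely because $\alpha<2$ — gives a bound $\widetilde C_\psi K^{-\alpha}$ for the second piece, and adding the two pieces yields the claim.

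I expect the only genuinely delicate point to be the convergence of this dyadic series in part~(i): $\alpha=2$ is exactly the threshold at which the kernel bound $K^{-2}\theta^{-2}$ stops being integrable against an $\alpha$-regular measure near $\theta=0$, so the hypothesis $\alpha<2$ enters in an essential way there. Everything else is bookkeeping with the three kernel estimates above, together with the observation that $\mu$ carries no mass at $z=1$. A clean alternative to the dyadic splitting in~(i) is an integration-by-parts (layer-cake) computation using the distribution function $\epsilon\mapsto\mu(A_\epsilon)$, which yields the same conclusion and assigns the same decisive role to the condition $\alpha<2$.
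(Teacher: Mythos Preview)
Your argument is correct. For part~(ii) it coincides with what the paper does (see the lower bound in the proof of Theorem~\ref{maintheorem}): bound the Fej\'er kernel from below by a constant on an arc of radius $\sim 1/K$. For part~(i) you take a genuinely different route. The paper does not prove Theorem~\ref{russiantheorem} itself but indicates (Remark after the statement, and the proof of Theorem~\ref{maintheorem}) that the intended tool is Kachurovskii--Sedalishchev's Lemma~\ref{mainlemma1}, which bounds the integral by $K^{-2}\mu(S_1)+K^{-2}\sum_{j=1}^{K-1}(2j+1)\mu(S_j)$ with $S_j=A_{\pi/j}$; one then inserts $\mu(S_j)\le C_\psi(\pi/j)^\alpha$ and sums $\sum_{j\le K} j^{1-\alpha}$, which is $\lesssim K^{2-\alpha}$ precisely when $\alpha<2$. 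Your dyadic-shell decomposition replaces this ``linear'' partition into $K$ annuli by $\sim\log K$ geometric ones, trading the power sum for a geometric series $\sum_{n\ge0}2^{n(\alpha-2)}$; the convergence threshold $\alpha<2$ appears in exactly the same way. The Kachurovskii lemma has the advantage of tracking constants more sharply (which is the point of~\cite{Kachurovskii2}); your dyadic argument is slightly more elementary and self-contained, avoiding the preliminary lemma altogether. Either way the content is the same two-sided comparison of the Fej\'er kernel with $\min(1,\,K^{-2}\theta^{-2})$.
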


\begin{remark}{ \rm Although Theorem~\ref{russiantheorem} was originally stated for Koopman operators,  its proof holds for any unitary operator. Namely, the proof is essentially a consequence of Lemmas 1.~and 2.~in \cite{Kachurovskii2} (see also Lemma \ref{mainlemma1} ahead), which also hold for abstract operators. It is worth mentioning that a refinement of Theorem 3.~in \cite{Kachurovskii01}, with optimal constants for which the result is valid, is presented by Theorem 1 in \cite{Kachurovskii2} .}
\end{remark}

It is clear from Lemma \ref{mainlemma} and Theorem \ref{russiantheorem} that, in this setting, the behavior of the power-law convergence rates in vNET  depends only on the local scale properties of $\mu_{\psi - \psi^*}^U$ at $z=1$. Taking this into account, our first result in this work shows (under some mild assumptions) that these convergence rates  are indeed ruled by the lower and upper \textit{pointwise exponents}  of $\mu_{\psi - \psi^*}^U$ at $z=1$. 

Recall that the lower and upper pointwise exponents of a finite (positive) Borel measure on $\partial \mathbb{D}$,  say $\mu$,  at $z=1$ are defined, respectively, by  
\[d_\mu^-(1) := \liminf_{\epsilon \downarrow 0} \frac{\ln \mu (A_\epsilon)}{\ln \epsilon} \quad{\rm  and }\quad d_\mu^+(1) := \limsup_{\epsilon \downarrow 0} \frac{\ln \mu (A_\epsilon)}{\ln \epsilon}\,,\]
if, for all small enough $\epsilon>0$,  $\mu(A_\epsilon)> 0$; one sets $d_\mu^{\mp}(w) := \infty$, otherwise.

\begin{theorem} \label{maintheorem} Let $U$ be a unitary operator on $\mathcal{H}$, $0\neq\psi \in \mathcal{H}$ and suppose that $d^+_{\mu_{\psi- \psi^*}^U}(1) \leq 2$. Then, 
\begin{equation}\label{eq0020}
\liminf_{K \to \infty}\frac{\ln \bigg\|{\dfrac{1}{K}} \displaystyle\sum_{j=0}^{K-1} U^j\psi -\psi^* \bigg\|^2 }{-\ln K} =  d^-_{\mu_{\psi- \psi^*}^U}(1),
\end{equation}
\end{theorem}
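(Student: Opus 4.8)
The plan is to lift the classical ``polynomial'' dictionary between the decay of the spectral mass $\mu(A_\epsilon)$ near $z=1$ and the decay of $a_K:=\big\|\frac1K\sum_{j=0}^{K-1}U^j\psi-\psi^*\big\|^2$ — encoded in Theorem~\ref{russiantheorem} — to the ``liminf of logarithms'' level of \eqref{eq0020}, where $\mu:=\mu_{\psi-\psi^*}^U$. First I would record the reductions. Since $\psi-\psi^*=(I-P^U(\{1\}))\psi$ is orthogonal to $I(U)$, one has $\mu(\{1\})=0$; the hypothesis $d^+_\mu(1)\le 2$ forces (by the very convention defining the pointwise exponents) $\mu(A_\epsilon)>0$ for all small $\epsilon>0$, hence $d^-_\mu(1)\le d^+_\mu(1)\le 2<\infty$ and $\mu\neq 0$. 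From $|\sin(K\theta/2)|\le K|\sin(\theta/2)|$ and Lemma~\ref{mainlemma} one gets $a_K\le\mu(\partial\mathbb{D})$ for all $K$; and since for $\epsilon<2\pi/K$ the only $K$-th root of unity in $A_\epsilon$ is $1$ while the integrand $|(z^K-1)/(z-1)|^2$ in Lemma~\ref{mainlemma} is strictly positive on $A_\epsilon\setminus\{1\}$, a set of positive $\mu$-measure, one also has $a_K>0$ for every $K$, so $\ln a_K$ is a genuine real number. It then suffices to prove the two inequalities between $\liminf_K \ln a_K/(-\ln K)$ and $d^-_\mu(1)$ separately.

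For the inequality $\liminf_K \ln a_K/(-\ln K)\ge d^-_\mu(1)$: the case $d^-_\mu(1)=0$ is immediate from $a_K\le\mu(\partial\mathbb{D})$, so fix $0<\alpha<d^-_\mu(1)$. Because $\liminf_{\epsilon\downarrow 0}\ln\mu(A_\epsilon)/\ln\epsilon>\alpha$ there is $\epsilon_0>0$ with $\mu(A_\epsilon)\le\epsilon^\alpha$ on $(0,\epsilon_0)$, and the trivial bound $\mu(A_\epsilon)\le\mu(\partial\mathbb{D})\le(\mu(\partial\mathbb{D})\epsilon_0^{-\alpha})\epsilon^\alpha$ on $[\epsilon_0,\pi]$ upgrades this to $\mu(A_\epsilon)\le C_\psi\,\epsilon^\alpha$ for all $\epsilon\in(0,\pi]$. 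Since $\alpha<d^-_\mu(1)\le 2$, Theorem~\ref{russiantheorem}(i) gives $\widetilde C_\psi>0$ with $a_K\le\widetilde C_\psi K^{-\alpha}$ for all $K$, whence $\ln a_K/(-\ln K)\ge\alpha-\ln\widetilde C_\psi/(-\ln K)$ and therefore $\liminf_K \ln a_K/(-\ln K)\ge\alpha$; letting $\alpha\uparrow d^-_\mu(1)$ finishes this part. (One can avoid Theorem~\ref{russiantheorem} here by estimating $a_K$ directly: split the integral in Lemma~\ref{mainlemma} over $A_{\pi/K}$ and its complement, and on the complement run a dyadic sum over the annuli $\{\pi 2^{-n}<|\theta|\le\pi 2^{-(n-1)}\}$; it is exactly the assumption $d^-_\mu(1)\le 2$, making the exponent $2-d^-_\mu(1)+\eta$ positive, that lets the geometric series be dominated by its last term $\sim K^{\,2-d^-_\mu(1)+\eta}$, yielding $a_K\lesssim K^{-d^-_\mu(1)+\eta}$.)

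For the inequality $\liminf_K \ln a_K/(-\ln K)\le d^-_\mu(1)$: the key pointwise estimate is
\[
a_K\ \ge\ \frac{4}{\pi^2}\,\mu(A_{\pi/K}),\qquad K\in\mathbb{N},
\]
obtained by keeping only the $A_{\pi/K}$-part of the integral in Lemma~\ref{mainlemma} and using $\sin t\ge (2/\pi)t$ on $[0,\pi/2]$ together with $|\sin(\theta/2)|\le|\theta|/2$, which give $\sin^2(K\theta/2)/\sin^2(\theta/2)\ge 4K^2/\pi^2$ on $A_{\pi/K}$. I would then choose $\epsilon_i\downarrow 0$ realizing the liminf, i.e. $\ln\mu(A_{\epsilon_i})/\ln\epsilon_i\to d^-_\mu(1)$, and set $K_i:=\lfloor\pi/\epsilon_i\rfloor$, so that $\epsilon_i\le\pi/K_i$, hence $A_{\epsilon_i}\subseteq A_{\pi/K_i}$ and $a_{K_i}\ge (4/\pi^2)\mu(A_{\epsilon_i})$. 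Taking logarithms and dividing by $-\ln K_i$, and using that $\pi/\epsilon_i-1<K_i\le\pi/\epsilon_i$ gives $-\ln K_i=\ln\epsilon_i+O(1)$ with $\ln\epsilon_i\to-\infty$ and $\ln\mu(A_{\epsilon_i})\to-\infty$, one concludes that $\ln a_{K_i}/(-\ln K_i)$ tends to a limit $\le d^-_\mu(1)$ along this subsequence, so $\liminf_K \ln a_K/(-\ln K)\le d^-_\mu(1)$.

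Combining the two parts yields \eqref{eq0020}. The main obstacle I anticipate is not a single estimate but the bookkeeping in passing between the continuous scale parameter $\epsilon\downarrow 0$ and the discrete times $K\to\infty$ — in particular checking that the floor construction $K_i=\lfloor\pi/\epsilon_i\rfloor$ does not distort the exponent, which is where the asymptotics $-\ln K_i=\ln\epsilon_i+O(1)$ and $\ln\mu(A_{\epsilon_i})\to-\infty$ enter — and, in parallel, verifying that every logarithm in sight is finite. That last point is exactly the role of the hypothesis $d^+_\mu(1)\le 2$: it guarantees $\mu(A_\epsilon)>0$ near $z=1$ and $d^-_\mu(1)\le 2$, so that Theorem~\ref{russiantheorem} is applicable with every exponent $\alpha<d^-_\mu(1)$ and the whole argument does not degenerate.
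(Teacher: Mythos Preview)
Your proof is correct and follows essentially the same route as the paper. For the inequality $\liminf_K \ln a_K/(-\ln K)\le d^-_{\mu}(1)$ both you and the paper restrict the integral in Lemma~\ref{mainlemma} to a neighborhood $A_{c/K}$ and use a pointwise lower bound on the Fej\'er kernel to get $a_K\gtrsim \mu(A_{c/K})$ (the paper takes $c=1$ with constant $1/4$, you take $c=\pi$ with constant $4/\pi^2$); for the reverse inequality the paper argues directly via Lemma~\ref{mainlemma1} with a case analysis on $d^--\epsilon$, whereas you invoke Theorem~\ref{russiantheorem}(i) as a black box --- but since Lemma~\ref{mainlemma1} is precisely the estimate underlying Theorem~\ref{russiantheorem}(i), this is only a difference in packaging, not in substance.
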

\begin{equation}\label{eq0010}
\limsup_{K \to \infty}\frac{\ln \bigg\|{\dfrac{1}{K}} \displaystyle\sum_{j=0}^{K-1} U^j\psi -\psi^* \bigg\|^2 }{-\ln K} =  d^+_{\mu_{\psi- \psi^*}^U}(1).
\end{equation}

\begin{remark} {\rm  Note that if $d^-_{\mu_{\psi- \psi^*}^U}(1)=d^+_{\mu_{\psi- \psi^*}^U}(1) =d \leq 2$, then it follows from Theorem \ref{maintheorem} that for each $\epsilon>0$ and each sufficiently large $K$, one has
\begin{eqnarray*}
K^{-\epsilon-d} \leq \bigg\| {\frac{1}{K}} \displaystyle\sum_{j=0}^{K-1} U^j\psi -  \psi^* \bigg\|^2 \leq K^{\epsilon-d}.
\end{eqnarray*}
We note that if $d=2$, then (in general) one cannot guarantee that for all sufficiently large $K$, 
\begin{eqnarray*}
\bigg\| {\frac{1}{K}} \displaystyle\sum_{j=0}^{K-1} U^j\psi -  \psi^* \bigg\|^2 \leq K^{-2};
\end{eqnarray*}
see, e.g., Remarks 2 and 3 in \cite{Kachurovskii01}. Examples where $d^-_{\mu_{\psi- \psi^*}^U}(1)=d^+_{\mu_{\psi- \psi^*}^U}(1) =2$ can be found on page~661 in \cite{Kachurovskii01} (see also \cite{Sinai}).} 
\end{remark}

\begin{remark}\label{remkoop}{ \rm It is well known that for Koopman operators, for every $\psi\in\mathcal{H}$ such that $\psi-\psi^* \not = 0$, the respective time-average presented in vNET does not converge to $\psi^*$ faster than $K^{-2}$
    (see, for instance, Corollary 5 in \cite{Gaposhkin}). In this sense, since most of the possible applications of Theorem \ref{maintheorem} are related to Koopman operators, the  hypothesis that $d^+_{\mu_{\psi- \psi^*}^U}(1) \leq 2$ is quite reasonable.}
\end{remark}

\begin{remark}{ \rm It is worth underlying that the problem of obtaining the actual values of the decaying exponents of time-averages in terms of the dimensional properties of measures in dynamical systems  and quantum dynamics is  very natural and recurring in the literature; see, for instance, \cite{Aloisio,Barbaroux1,Barros,Barros2,CarvalhoEandDS,Holschneider}.  In this context, Theorem~\ref{maintheorem} does add something to the body of knowledge on power-law convergence rates in vNET; namely, it establishes an explicit relation between the power-law convergence rates of such time-average and the local scale spectral properties of the unitary operator $U$ at $z=1$.}
\end{remark}

By Theorem \ref{maintheorem}, if  for some $\psi\in\mathcal{H}$ one has
\begin{equation*}
  0 \leq d^-_{\mu_{\psi- \psi^*}^U}(1) < d^+_{\mu_{\psi- \psi^*}^U}(1) \leq 2,
  \end{equation*}
then   the power-law convergence rates  (related to~$\psi$)  in vNET actually depend on sequences of time going to infinity.

Write $L:= UP^U(\partial\mathbb{D}\setminus\{1\})$;  in this work, we also prove that the condition~ $$d^-_{\mu_{\psi- \psi^*}^U}(1) < d^+_{\mu_{\psi- \psi^*}^U}(1)$$ is satisfied for a unitary operator~$U$ and for a $G_\delta$ dense set of vectors $\psi\in\mathcal{H}$, if $L^j$ converges weakly to zero  (i.e., for each pair $\psi, \varphi \in \mathcal{H}$, $\langle L^j \psi,\varphi \rangle \to 0$ as $j \to \infty$) and if~$1$ is an accumulation point of $\sigma(U)$.

\begin{theorem}\label{maintheorem2}  Let $U$ be a unitary operator on $\mathcal{H}$ such that~$1$ is an accumulation point of its spectrum~$\sigma(U)$. If~$L^j$  converges to zero in the weak operator topology, then the set $\mathcal{G}(U)$ of $\psi \in \mathcal{H}$ such that, for all $\epsilon >0$,
\begin{eqnarray}\label{eqeqeq0101a}
\limsup_{K \to \infty} K^\epsilon\, \bigg\|{\frac{1}{K}} \displaystyle\sum_{j=0}^{K-1} U^j \psi - \psi^* \bigg\|^2 = \infty
\end{eqnarray}
and
\begin{eqnarray}\label{eqeqeq0101b}
\liminf_{K \to \infty} K^{2-\epsilon}\, \bigg\|{\frac{1}{K}} \displaystyle\sum_{j=0}^{K-1} U^j \psi - \psi^* \bigg\|^2 = 0,
\end{eqnarray}
is a dense $G_\delta$ set in $\mathcal{H}$.
\end{theorem}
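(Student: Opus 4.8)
The plan is a Baire category argument. Write $S_K:=\tfrac1K\sum_{j=0}^{K-1}U^j$; as $S_K$ and $P^U(\{1\})$ are bounded, $\phi\mapsto\|S_K\phi-\phi^*\|^2$ is continuous on $\mathcal H$. For a nonnegative sequence $(a_K)$, one has $\limsup_K a_K=\infty$ iff $\{K\mid a_K>M\}$ is infinite for every $M$, and $\liminf_K a_K=0$ iff $\{K\mid a_K<\delta\}$ is infinite for every $\delta$; also, in \eqref{eqeqeq0101a}--\eqref{eqeqeq0101b} one may restrict $\epsilon$ to $\{1/n\mid n\in\mathbb N\}$. Hence $\mathcal G(U)=\big(\bigcap_{n,M,N}V_{n,M,N}\big)\cap\big(\bigcap_{n,m,N}W_{n,m,N}\big)$, where, for $n,M,N,m\in\mathbb N$,
\[V_{n,M,N}:=\bigcup_{K>N}\big\{\phi\in\mathcal H\mid K^{1/n}\|S_K\phi-\phi^*\|^2>M\big\},\]
\[W_{n,m,N}:=\bigcup_{K>N}\big\{\phi\in\mathcal H\mid K^{2-1/n}\|S_K\phi-\phi^*\|^2<\tfrac1m\big\}\]
are open; thus $\mathcal G(U)$ is $G_\delta$, and since $\mathcal H$ is complete it remains to prove that each $V_{n,M,N}$ and each $W_{n,m,N}$ is dense.

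Density of the $W_{n,m,N}$ is the easy half, which I would treat first. Fix $\varphi\in\mathcal H$ and $\rho>0$. Since $\mu_{\varphi-\varphi^*}^U(\{1\})=\|P^U(\{1\})(\varphi-\varphi^*)\|^2=0$, continuity of the measure gives $\mu_{\varphi-\varphi^*}^U(A_\eta)\downarrow0$ as $\eta\downarrow0$, so for $\eta$ small the vector $\psi:=\varphi^*+P^U(\partial\mathbb D\setminus A_\eta)(\varphi-\varphi^*)$ satisfies $\|\psi-\varphi\|^2=\mu_{\varphi-\varphi^*}^U(A_\eta)<\rho^2$. One has $\psi^*=\varphi^*$, while $\mu_{\psi-\psi^*}^U$ is supported in $\partial\mathbb D\setminus A_\eta$, where $|z-1|\ge c_\eta:=2\sin(\eta/2)>0$; Lemma \ref{mainlemma} then gives $\|S_K\psi-\psi^*\|^2\le\tfrac{4}{c_\eta^2K^2}\|\psi-\psi^*\|^2$, whence $K^{2-1/n}\|S_K\psi-\psi^*\|^2\to0$ and $\psi\in W_{n,m,N}$ for every $n,m,N$. (Only the Spectral Theorem is used here.)

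The substance is in the density of the $V_{n,M,N}$. Fix $\varphi,\rho$ and $n,M,N$, and set $a:=\rho/2$. By von Neumann's Ergodic Theorem applied to $\varphi-\varphi^*$ (whose projection onto $I(U)$ vanishes), pick $K_0$ with $\|S_K(\varphi-\varphi^*)\|<a/\pi$ for all $K\ge K_0$. Since $1$ is an accumulation point of $\sigma(U)$, a sufficiently small open arc about a point of $\sigma(U)$ in $A_{\epsilon/2}\setminus\{1\}$ has nonzero spectral projection, so $P^U(A_\epsilon\setminus\{1\})\neq0$ for every $\epsilon>0$; choose $\eta\in\operatorname{Ran}P^U(A_\epsilon\setminus\{1\})$ with $\|\eta\|=a$, so that $\eta^*=0$ and $\mu_\eta^U(A_\epsilon\setminus\{1\})=a^2$. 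For $z=e^{i\theta}$ with $0<|\theta|\le\epsilon$ and $K\le1/\epsilon$, one has $\big|\tfrac{z^K-1}{z-1}\big|^2=\tfrac{\sin^2(K\theta/2)}{\sin^2(\theta/2)}\ge\tfrac{4K^2}{\pi^2}$ by Jordan's inequality, so Lemma \ref{mainlemma} yields $\|S_K\eta\|^2\ge\tfrac{4a^2}{\pi^2}$ whenever $K\le1/\epsilon$. Now take $\epsilon\le1/2$ so small that $K:=\lfloor1/\epsilon\rfloor$ exceeds $\max(K_0,N)$ and $(2\epsilon)^{-1/n}a^2/\pi^2>M$ (possible since $K\ge1/(2\epsilon)$ and the latter quantity tends to $\infty$ as $\epsilon\downarrow0$), and set $\psi:=\varphi+\eta$; then $\|\psi-\varphi\|=a<\rho$ and $\psi^*=\varphi^*$. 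From $S_K\psi-\psi^*=S_K(\varphi-\varphi^*)+S_K\eta$ we obtain $\|S_K\psi-\psi^*\|\ge\|S_K\eta\|-\|S_K(\varphi-\varphi^*)\|\ge\tfrac{2a}{\pi}-\tfrac{a}{\pi}=\tfrac{a}{\pi}$, hence $K^{1/n}\|S_K\psi-\psi^*\|^2\ge(2\epsilon)^{-1/n}a^2/\pi^2>M$ with $K>N$, i.e.\ $\psi\in V_{n,M,N}$. Baire's theorem then finishes the proof.

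The step I expect to be the genuine obstacle is the density of the $V_{n,M,N}$: one must create, by an arbitrarily small perturbation, a real slowdown of the time-average at some (arbitrarily large) time. The point is that the Fejér-type weight $\big|\tfrac1K\tfrac{z^K-1}{z-1}\big|^2$ is of order $1$, not $K^{-2}$, on the arc $\{e^{i\theta}\mid|\theta|\lesssim1/K\}$, so adding a small vector $\eta$ whose spectral measure is concentrated at a scale $\epsilon$ near $z=1$ forces $\|S_K\psi-\psi^*\|^2$ to be of size $\|\eta\|^2$ precisely at times $K\approx1/\epsilon$, which can be made as large as one wishes. The hypothesis that $1$ is an accumulation point of $\sigma(U)$ is exactly what supplies, for every $\epsilon>0$, such an $\eta$ with $\supp\mu_\eta^U\subset A_\epsilon\setminus\{1\}$. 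The weak-operator convergence $L^j\to0$ is equivalent, via Wiener's lemma, to every $\mu_{\psi-\psi^*}^U$ being a Rajchman (hence continuous) measure; it does not seem to be needed for the density estimates above, and is presumably invoked to link the statement with Theorem \ref{maintheorem}, under which \eqref{eqeqeq0101a}--\eqref{eqeqeq0101b} assert precisely that $d^-_{\mu_{\psi-\psi^*}^U}(1)=0$ and $d^+_{\mu_{\psi-\psi^*}^U}(1)=2$ throughout the residual set $\mathcal G(U)$. The one routine point left is to confirm that $\mathcal G(U)$ coincides with the countable intersection of the $V_{n,M,N}$ and $W_{n,m,N}$, which is the quantifier unwinding of the first paragraph.
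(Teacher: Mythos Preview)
Your proof is correct, and the $G_\delta$ bookkeeping together with the $W_{n,m,N}$ density argument match the paper's essentially verbatim. The interesting point is the density of the $V_{n,M,N}$, where you take a genuinely different---and in fact more elementary and stronger---route than the paper.

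The paper does not construct, for each triple $(n,M,N)$, a perturbation that is bad at a single time $K$. Instead it first proves an auxiliary lemma (their Lemma~\ref{teclemma2}): using the Badea--M\"uller theorem on weak orbits (their Theorem~\ref{Muller}, which \emph{requires} the hypothesis that $L^j\to 0$ weakly and that $1\in\sigma(L)$), it produces a single unit vector $\eta$ with $\liminf_K K^{\epsilon'}\|S_K\eta-\eta^*\|^2=\infty$, i.e.\ a vector whose ergodic averages decay slowly for \emph{all} large $K$. Then, given $\psi\notin G^+(\epsilon)$, it shows $\psi_m:=\psi+\eta/m\in G^+(\epsilon)$ for every $m$, so $G^+(\epsilon)$ itself is dense. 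Your argument avoids Badea--M\"uller entirely: you only need the Fej\'er kernel to be of order~$1$ on the arc $A_{1/K}$, and the hypothesis that $1$ is an accumulation point of $\sigma(U)$ to supply, for every scale $\epsilon$, a nonzero $\eta$ with $\supp\mu_\eta^U\subset A_\epsilon\setminus\{1\}$. Because you must only hit one time $K>N$, this suffices.

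What each approach buys: the paper's route gives, as a byproduct, the stronger Lemma~\ref{teclemma2} (existence of a vector with uniformly slow decay), but at the price of invoking an external theorem and the weak-convergence hypothesis. Your route is self-contained (Spectral Theorem plus Jordan's inequality) and, as you correctly suspect, shows that the hypothesis $L^j\to0$ weakly is \emph{not needed} for Theorem~\ref{maintheorem2}; only ``$1$ is an accumulation point of $\sigma(U)$'' is used. Your remark that the weak-convergence hypothesis is there to connect with Theorem~\ref{maintheorem} is not quite right---in the paper it is used substantively, via Badea--M\"uller---but your argument demonstrates it is in fact superfluous for the conclusion as stated.
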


\begin{corollary}\label{corollaymain} Under the same hypotheses of  Theorem \ref{maintheorem2}, if $\psi \in \mathcal{G}(U)$, then 
\[0=d^-_{\mu_{\psi- \psi^*}^U}(1) < 2 \leq d^+_{\mu_{\psi- \psi^*}^U}(1).\] 
\end{corollary}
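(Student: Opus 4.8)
The plan is to read the two defining conditions \eqref{eqeqeq0101a} and \eqref{eqeqeq0101b} of $\mathcal{G}(U)$ as, respectively, a lower and an upper control on the decay of $a_K:=\big\|\frac1K\sum_{j=0}^{K-1}U^j\psi-\psi^*\big\|^2$, and then to convert these into statements about the pointwise exponents of $\mu:=\mu^U_{\psi-\psi^*}$ at $1$ by means of Theorems \ref{maintheorem} and \ref{russiantheorem}. As preliminaries I would observe that \eqref{eqeqeq0101a} rules out $a_K\equiv 0$, so $\psi\neq\psi^*$ and $\mu$ is a nonzero finite measure; that $\mu(\{1\})=\|P^U(\{1\})(\psi-\psi^*)\|^2=0$; and that the hypothesis $L^j\to 0$ in the weak operator topology forbids eigenvectors of $U$ with eigenvalue $\neq 1$ (indeed $Uv=\lambda v$ with $v\perp I(U)$ would force $\langle L^jv,v\rangle=\lambda^j\|v\|^2\not\to 0$), so $\mu$ is non-atomic and $a_K>0$ for every $K$; finally, $\mu(\{1\})=0$ forces $d^-_\mu(1)\geq 0$. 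It therefore suffices to prove $d^-_\mu(1)\leq 0$ and $d^+_\mu(1)\geq 2$.

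For $d^-_\mu(1)\leq 0$ I would argue by contradiction. If $d^-_\mu(1)>0$, pick $\alpha\in(0,2)$ with $\alpha<d^-_\mu(1)$; by the definition of the lower pointwise exponent (and the convention in force when $\mu(A_\epsilon)$ vanishes near $0$) there is $\epsilon_0\in(0,1]$ with $\mu(A_\epsilon)\leq\epsilon^\alpha$ for $0<\epsilon\leq\epsilon_0$, and since $\mu(A_\epsilon)\leq\mu(\partial\mathbb{D})\leq\mu(\partial\mathbb{D})\epsilon_0^{-\alpha}\epsilon^\alpha$ on $(\epsilon_0,\pi]$ one gets $\mu(A_\epsilon)\leq C\epsilon^\alpha$ for all $\epsilon\in(0,\pi]$, with $C:=\max\{1,\mu(\partial\mathbb{D})\epsilon_0^{-\alpha}\}$. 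Item~i) of Theorem \ref{russiantheorem} then supplies $\widetilde{C}>0$ with $a_K\leq\widetilde{C} K^{-\alpha}$ for all $K$, so $K^{\alpha/2}a_K\leq\widetilde{C} K^{-\alpha/2}\to 0$, contradicting \eqref{eqeqeq0101a} with $\epsilon=\alpha/2$. Hence $d^-_\mu(1)=0$.

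For $d^+_\mu(1)\geq 2$ I would again argue by contradiction. If $d^+_\mu(1)<2$, then in particular $d^+_\mu(1)\leq 2$, so Theorem \ref{maintheorem} applies and gives $\limsup_K \ln a_K/(-\ln K)=d^+_\mu(1)<2$. Fixing $\delta\in(0,2-d^+_\mu(1))$ there is $K_0$ such that $\ln a_K/(-\ln K)\leq 2-\delta$, i.e.\ $a_K\geq K^{\delta-2}$, for all $K\geq K_0$; hence $K^{2-\delta/2}a_K\geq K^{\delta/2}\to\infty$, so $\liminf_K K^{2-\delta/2}a_K=\infty$, contradicting \eqref{eqeqeq0101b} with $\epsilon=\delta/2$. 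Thus $d^+_\mu(1)\geq 2$, and combining the two steps gives $0=d^-_\mu(1)<2\leq d^+_\mu(1)$.

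The only genuinely delicate point is this last step, since Theorem \ref{maintheorem} is not available until one knows $d^+_\mu(1)\leq 2$ — which is half of what we want — and it is precisely the contradiction hypothesis $d^+_\mu(1)<2$ that makes the theorem applicable. Should one wish to sidestep Theorem \ref{maintheorem} here, the same bound follows from Lemma \ref{mainlemma} together with the elementary estimate $\big|\sum_{j=0}^{K-1}z^{j}\big|\geq cK$ on an arc $A_{\kappa/K}$ about $1$ (universal $c,\kappa>0$): this gives $\mu(A_{\kappa/K})\leq c^{-2}a_K$, and for each fixed $\epsilon>0$ a sequence $K_n$ along which \eqref{eqeqeq0101b} forces $a_{K_n}\leq K_n^{\epsilon-2}$ yields scales $\epsilon_n:=\kappa/K_n\downarrow 0$ with $\ln\mu(A_{\epsilon_n})/\ln\epsilon_n\to 2-\epsilon$, whence $d^+_\mu(1)\geq 2-\epsilon$ for all $\epsilon>0$.
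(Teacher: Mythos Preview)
Your proof is correct and follows essentially the same route as the paper's: both parts are argued by contradiction, with $d^-=0$ obtained via Theorem~\ref{russiantheorem}~i) and $d^+\geq 2$ via the upper bound in Theorem~\ref{maintheorem} (your observation that the contradiction hypothesis $d^+<2$ is precisely what licenses the appeal to Theorem~\ref{maintheorem} matches the paper exactly). Your preliminaries on non-atomicity and $a_K>0$, and your closing alternative argument for $d^+\geq 2$ via Lemma~\ref{mainlemma} directly, are sound but go beyond what the paper records; the extension of the bound $\mu(A_\epsilon)\leq C\epsilon^\alpha$ from small $\epsilon$ to all of $(0,\pi]$ is a detail the paper glosses over and that you handle correctly.
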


\begin{remark}
{\rm The results presented in this work are analogous to those  obtained by the first three authors in \cite{Aloisio} on the convergence rates of normal semigroups (see also \cite{Alo,Carvalho1,CarvalhoEandDS}). The main technical difference is that, for normal semigroups, it is enough to analyze the (self-adjoint) real part of the generator in order to evaluate the power-law convergence rates, whereas here one needs to analyze  spectral properties of unitary operators.

We note that an adaptation of the arguments in~\cite{Aloisio} could be employed to study the convergence rates in vNET for continuous dynamical systems, since this case leads naturally to self-adjoint generators.}
\end{remark}

\subsection{Notations and organization of the text}\label{secPU}

Some words about notation: $U$ will always denote a unitary operator acting on the separable complex Hilbert space~$\mathcal H$; we denote its spectrum by $\sigma(U)$. 

Recall that, by the Spectral Theorem \cite{Rudin} (see also \cite{Damanikunit}), there is a unique map that associates with each bounded Borel measurable function $f$ on the unit circle $\partial \mathbb{D}$ a bounded normal operator, that is,
\begin{equation*}
    P^U(f) = \int_{ \partial \mathbb{D}} f(z) \, dP^U(z),
\end{equation*}
that satisfies, for each $\varphi \in \mathcal{H}$,
\begin{equation*}
    \Vert P^U(f)\varphi\Vert^2 =  \bigg\| \int_{ \partial \mathbb{D}} f(z) \, dP^U(z)\varphi \bigg\|^2 = \int_{ \partial \mathbb{D}} |f(z)|^2 d\mu_{\varphi}^U(z);
\end{equation*}
here, $\mu_{\varphi}^U$ is a positive finite Borel measure supported on  $\partial \mathbb{D}$, the so-called spectral measure of $U$ with respect to $\varphi$.

One can also define, for each Borel set $\Lambda \subset \partial \mathbb{D}$, the spectral projection $$P^U(\Lambda) := P^U(\chi_\Lambda);$$ the family $\{P^U(\Lambda)\}_{\Lambda}$  is called the resolution of the identity of $U$ (see~\cite{Oliveira,Rudin}).

The work is organized as follows. In Section \ref{secKoopmanoperators}, we illustrate our general results by presenting an explicit application of Theorem~\ref{maintheorem2} to Koopman operators. The proof of Theorems~\ref{maintheorem} and~\ref{maintheorem2}  and of Corollary \ref{corollaymain} are left to Section \ref{secproofs}. In Appendix~\ref{appendix}, we prove Lemma \ref{mainlemma} and Theorem \ref{spectralgap}.  


\section{Application  to Koopman operators}\label{secKoopmanoperators}

\noindent Let $(X, \mathcal{A},m)$ be a Lebesgue space with continuous measure $m$ (i.e., a non-atomic probability space obtained by the completion of a Borel measure on a complete separable metric space); for every automorphism $T$ on $X$, one defines the Koopman operator related to $T$ by the law 
\[ U_T : L_m^2(X) \to  L_m^2(X),   \quad (U_T f )(x) := f (T x),\quad x \in X.\]

We say that $\sigma(U_T)$ is purely absolutely continuous  on $\partial \mathbb{D}\setminus\{1\}$ if $U_T$ is purely absolutely continuous on 
\[
\big\{f\in  L_m^2(X)\mid \textstyle\int_X f(x)\,dm(x)=0\big\};
\] observe that $1$ is always an eigenvalue of the Koopman operator, since
$U_T \varphi = \varphi$ for every constant function $\varphi$.

\begin{theorem}\label{application1} Let $T$ be an automorphism on~$X$ with purely absolutely continuous spectrum on $\partial \mathbb{D}\setminus\{1\}$ such that $1$ is an accumulation point of $\sigma(U_T)$. Then, the set of vectors $\psi \in L_m^2(X)$ so that, for all $\epsilon >0$,
\begin{eqnarray*}
\limsup_{K \to \infty} K^\epsilon \bigg\|{\frac{1}{K}} \displaystyle\sum_{j=0}^{K-1} U_T^j \psi - \psi^* \bigg\|^2 = \infty
\end{eqnarray*}
and
\begin{eqnarray*}
\liminf_{K \to \infty} K^{2-\epsilon} \bigg\|{\frac{1}{K}} \displaystyle\sum_{j=0}^{K-1} U_T^j \psi - \psi^* \bigg\|^2 = 0,
\end{eqnarray*}
is a dense $G_\delta$ set in $L_m^2(X)$.
\end{theorem}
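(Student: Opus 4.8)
The plan is to obtain Theorem~\ref{application1} as a direct consequence of Theorem~\ref{maintheorem2} applied to the unitary operator $U=U_T$ acting on $\mathcal{H}=L^2_m(X)$. Since $1$ is assumed to be an accumulation point of $\sigma(U_T)$, the only hypothesis of Theorem~\ref{maintheorem2} left to verify is that $L^j$, with $L=U_T\,P^{U_T}(\partial\mathbb{D}\setminus\{1\})$, converges to zero in the weak operator topology; once this is established, the set $\mathcal{G}(U_T)$ furnished by Theorem~\ref{maintheorem2} is precisely the dense $G_\delta$ set claimed in Theorem~\ref{application1}.

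First I would pin down the relevant spectral subspaces. Let $\mathcal{H}_0:=\{f\in L^2_m(X)\mid \int_X f\,dm=0\}$ denote the orthogonal complement of the constant functions. By hypothesis $U_T$ restricted to $\mathcal{H}_0$ is purely absolutely continuous, hence has no eigenvalues there, so $I(U_T)\cap\mathcal{H}_0=\{0\}$; writing $f\in I(U_T)$ as $f=c+(f-c)$ with $c=\int_X f\,dm$ (so that $f-c\in I(U_T)\cap\mathcal{H}_0$) shows that $I(U_T)$ is exactly the space of constants. Therefore $P^{U_T}(\{1\})$ is the projection onto the constants, $P^{U_T}(\partial\mathbb{D}\setminus\{1\})$ is the projection onto $\mathcal{H}_0$, and $L$ annihilates the constants while coinciding with $U_T|_{\mathcal{H}_0}$ on $\mathcal{H}_0$. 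In particular, for every $\psi\in L^2_m(X)$ and every $j\geq 1$ one has $L^j\psi=U_T^{\,j}(\psi-\psi^*)$ with $\psi-\psi^*\in\mathcal{H}_0$.

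The heart of the argument is the weak convergence $L^j\to 0$. Fix $\psi,\varphi\in L^2_m(X)$. Since $\mathcal{H}_0$ is $U_T$-invariant, $U_T^{\,j}(\psi-\psi^*)\in\mathcal{H}_0\perp I(U_T)$, so
\[
\langle L^j\psi,\varphi\rangle=\big\langle U_T^{\,j}(\psi-\psi^*),\,\varphi-\varphi^*\big\rangle=\int_{\partial\mathbb{D}}z^{\,j}\,d\mu^{U_T}_{\psi-\psi^*,\,\varphi-\varphi^*}(z),
\]
where $\mu^{U_T}_{\psi-\psi^*,\,\varphi-\varphi^*}$ is the associated complex spectral measure. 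By polarization this complex measure is a linear combination of spectral measures of vectors lying in $\mathcal{H}_0$, all of which are absolutely continuous with respect to arc-length measure on $\partial\mathbb{D}$; hence $\mu^{U_T}_{\psi-\psi^*,\,\varphi-\varphi^*}$ is absolutely continuous as well, and the Riemann--Lebesgue lemma gives $\int_{\partial\mathbb{D}}z^{\,j}\,d\mu^{U_T}_{\psi-\psi^*,\,\varphi-\varphi^*}(z)\to 0$ as $j\to\infty$. Thus $\langle L^j\psi,\varphi\rangle\to 0$ for all $\psi,\varphi$, i.e.\ $L^j\to 0$ in the weak operator topology.

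With both hypotheses of Theorem~\ref{maintheorem2} verified for $U=U_T$, that theorem produces a dense $G_\delta$ set $\mathcal{G}(U_T)\subset L^2_m(X)$ on which \eqref{eqeqeq0101a} and \eqref{eqeqeq0101b} hold for every $\epsilon>0$, which is exactly the statement of Theorem~\ref{application1}. I do not anticipate a serious obstacle here: the argument reduces to invoking an already-proved abstract result, and the single nontrivial point is translating ``purely absolutely continuous spectrum on $\partial\mathbb{D}\setminus\{1\}$'' into weak convergence of $L^j$, which is the standard Riemann--Lebesgue argument above once the subspace $\operatorname{ran}P^{U_T}(\partial\mathbb{D}\setminus\{1\})=\mathcal{H}_0$ has been correctly identified.
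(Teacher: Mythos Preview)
Your proposal is correct and follows the same overall strategy as the paper: reduce Theorem~\ref{application1} to Theorem~\ref{maintheorem2} by verifying that $L^j\to 0$ weakly. The paper packages this verification as the abstract Proposition~\ref{tecprop} (valid for any unitary $U$ with absolutely continuous spectrum on $\partial\mathbb{D}\setminus\{1\}$), whose proof proceeds via cyclic subspaces, an approximation by bounded Borel functions, and the Moore--Osgood theorem to interchange limits. Your route is more elementary: after identifying $\operatorname{ran}P^{U_T}(\partial\mathbb{D}\setminus\{1\})$ with the mean-zero subspace $\mathcal{H}_0$, you use polarization to write $\langle L^j\psi,\varphi\rangle$ as a finite linear combination of integrals $\int z^j\,d\mu^{U_T}_\eta$ with $\eta\in\mathcal{H}_0$, each of which vanishes by Riemann--Lebesgue directly. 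This avoids the cyclic-subspace machinery and the double-limit interchange entirely; on the other hand, the paper's formulation isolates a reusable abstract statement (Proposition~\ref{tecprop}) that does not refer to the Koopman structure. Your explicit identification $I(U_T)=\{\text{constants}\}$ is also a point the paper leaves implicit when passing from the Koopman hypothesis to the hypothesis of Proposition~\ref{tecprop}.
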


\begin{remark}\end{remark}
{\rm    
\begin{enumerate}
    \item [i)] The result stated in Theorem  \ref{application1} applies to $K$-automorphisms, in particular to Bernoulli shifts \cite{Rohlin}.

    \item[ii)] Note that under the assumptions of Theorem \ref{application1}, it follows from 
      Corollary \ref{corollaymain} that for a typical $\psi\in L_m^2(X)$, $d^-_{\mu_{\psi- \psi^*}^T}(1)=0$  and $d^+_{\mu_{\psi- \psi^*}^T}(1)\geq2$.
\end{enumerate}}

The proof of Theorem \ref{application1} is a direct consequence of Theorem \ref{maintheorem2} and of the next result. 

\begin{proposition}\label{tecprop} Let $U$ be a unitary operator on $\mathcal{H}$ with absolutely continuous spectrum in $\partial \mathbb{D}\setminus \{1\}$ and  $L=UP^U(\partial \mathbb{D}\setminus \{1\})$.  Then, $L^j$ converges to zero in the weak operator topology as $j \to \infty$.
\end{proposition}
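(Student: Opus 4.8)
The plan is to reduce the statement to the Riemann--Lebesgue lemma applied to an $L^1$ function on the circle. First I would observe that $U$ commutes with the spectral projection $P := P^U(\partial\mathbb{D}\setminus\{1\})$ and that $P^2 = P$, so that
\[
L^j = \big(UP\big)^j = U^j P^j = U^j P, \qquad j \in \mathbb{N}.
\]
Consequently, for $\psi,\varphi \in \mathcal{H}$, using $P U^j P = U^j P$ one gets
\[
\langle L^j\psi,\varphi\rangle = \langle U^j P\psi, P\varphi\rangle = \int_{\partial\mathbb{D}} z^j \, d\nu(z),
\]
where $\nu := \langle P^U(\cdot)\,P\psi,\,P\varphi\rangle$ is the (complex) spectral measure of $U$ associated with the pair $P\psi$, $P\varphi$.

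The next step is to show that $\nu$ is absolutely continuous with respect to the normalized arc-length measure $\tfrac{d\theta}{2\pi}$ on $\partial\mathbb{D}$. By the Cauchy--Schwarz inequality for spectral measures (equivalently, by polarization), $\nu$ is absolutely continuous with respect to $\mu_{P\psi}^U + \mu_{P\varphi}^U$. Now $\mu_{P\psi}^U(\{1\}) = \|P^U(\{1\})P^U(\partial\mathbb{D}\setminus\{1\})\psi\|^2 = 0$, and likewise for $P\varphi$, so both measures are concentrated on $\partial\mathbb{D}\setminus\{1\}$; by the hypothesis that $U$ has absolutely continuous spectrum in $\partial\mathbb{D}\setminus\{1\}$, they are absolutely continuous with respect to $\tfrac{d\theta}{2\pi}$. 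Hence so is $\nu$, and we may write $d\nu = g\,\tfrac{d\theta}{2\pi}$ for some $g \in L^1(\partial\mathbb{D})$.

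Finally, with $z = e^{i\theta}$,
\[
\langle L^j\psi,\varphi\rangle = \frac{1}{2\pi}\int_0^{2\pi} e^{ij\theta} g(e^{i\theta})\,d\theta = \widehat{g}(-j),
\]
the $(-j)$-th Fourier coefficient of $g \in L^1$, which tends to $0$ as $j \to \infty$ by the Riemann--Lebesgue lemma. Since $\psi,\varphi$ were arbitrary, $L^j \to 0$ in the weak operator topology.

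I do not expect a serious obstacle here; the only points requiring care are the algebraic identity $L^j = U^j P$ (which hinges on the commutation of $U$ with its spectral projections together with idempotency of $P$) and the passage from absolute continuity of the scalar spectral measures $\mu_{P\psi}^U,\mu_{P\varphi}^U$ to that of the complex measure $\nu$, which is a routine polarization argument once one checks that the atom at $z=1$ has been removed by $P$.
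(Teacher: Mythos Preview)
Your proof is correct. Both your argument and the paper's rest on the Riemann--Lebesgue lemma and the spectral representation of $U$, but the technical routes differ. The paper works inside the cyclic subspace $\mathcal{H}_\psi$: it writes $P_\psi\varphi = P^U(h)\psi$ for some $h\in L^2_{\mu_\psi^U}$, approximates $h$ by bounded functions $g_n$ so that $g_n f\in L^1$ (with $f$ the Radon--Nikodym density of $\mu_\psi^U$ on $\partial\mathbb{D}\setminus\{1\}$), applies Riemann--Lebesgue to each $g_n$, and then invokes the Moore--Osgood theorem to interchange the limits in $j$ and $n$. You instead pass directly to the off-diagonal complex spectral measure $\nu=\mu^U_{P\psi,P\varphi}$, observe via Cauchy--Schwarz (or polarization) that $|\nu|\ll \mu^U_{P\psi}+\mu^U_{P\varphi}$, conclude that $\nu$ has an $L^1$ density, and apply Riemann--Lebesgue once. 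Your route is more economical: it bypasses the approximation step and the Moore--Osgood interchange entirely, at the modest cost of invoking the absolute continuity of the complex measure (a standard polarization fact). The paper's route stays with scalar measures throughout, which is perhaps more elementary in its ingredients but longer in execution.
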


\begin{proof}  For each $\phi \in \mathcal{H}$, let 
\[\mathcal{H}_\phi:= \big\{P^U(g)\phi \, \mid \, g \in L^2_{\mu_\phi^U}(\partial \mathbb{D}) \big\}\]
be the cyclic subspace generated by $\phi$, and so $P_\phi$ be the corresponding orthogonal projection onto $\mathcal{H}_\phi$ and $F: \mathcal{H}_\phi \to L^2_{\mu_\phi^U}(\partial \mathbb{D})$ be the unitary operator given by the law
 $F(P^U(g)\phi) = g$ (note that $L=P^U(g \chi_{\partial \mathbb{D}\setminus \{1\}})$, with $g(z) = z$).

Fix $\psi,\varphi\in\mathcal{H}$ and let $h \in L^2_{\mu_\psi^U}(\partial \mathbb{D})$ be such that $P_\psi \varphi= P^U(h)\psi$. Let also $\{g_n\}$  be a  sequence in the space of bounded Borel functions in $\partial \mathbb{D}$, which is a dense subspace of  $L^2_{\mu_\psi^U}(\partial \mathbb{D})$, with $||g_n - h||_{L^2_{\mu_\psi^U}(\partial \mathbb{D})} \to 0$ as $n \to \infty$. 

Now, since $\mu_\psi^U(\cdot\cap\partial \mathbb{D}\setminus \{1\})$ is absolutely continuous, it follows from Radon-Nikodym Theorem that there exists $f \in L^1(\partial \mathbb{D}\setminus \{1\})$ such that $d\mu_\psi^U(z) = f(z) dz$. Therefore, $g_n f \in L^1(\partial \mathbb{D}\setminus \{1\})$ for each $n\in\mathbb{N}$, and so, it follows from Riemann-Lebesgue Lemma that for every $n \in \mathbb{N}$,   
\begin{eqnarray}\label{vaiazero}
\nonumber   \lim_{j \to \infty} \big\langle  L^j \psi, P^U(g_n)\psi \big\rangle  &=&    \lim_{j \to \infty}  \int_{ \partial \mathbb{D}\setminus \{1\}} z^j  g_n(z) d\mu_\psi^U(z)\\   \nonumber &=&  \lim_{j \to \infty}  \int_{ \partial \mathbb{D}\setminus \{1\}} z^j  g_n(z) f(z) dz \\ &=&  \lim_{j \to \infty}  \int_0^1 e^{2i\pi \theta j} g_n (e^{2i\pi \theta}) f(e^{2i\pi \theta})  d\theta =0;
\end{eqnarray}
 we have used the fact that the Lebesgue measure on $\partial\mathbb{D}$ is the pushforward, with respect to the map $h:[0,1)\rightarrow \partial\mathbb{D}$, $h(\theta)=e^{2\pi i\theta}$, of the Lebesgue measure on $[0,1)$. 

On the other hand, for each $j \geq 0$, one has
\begin{eqnarray*}
\nonumber \big|\big\langle L^j \psi,\varphi \big\rangle - \big\langle L^j \psi, P^U(g_n)\psi\big\rangle\big|  &=&   \big|\big\langle P_\psi  L^j \psi,\varphi \big\rangle - \big\langle L^j \psi, P^U(g_n)\psi\big\rangle \big|\\ &=& \big| \big\langle L^j \psi, P_\psi\varphi \big\rangle - \big\langle L^j \psi, P^U(g_n)\psi\big\rangle \big| \\ &=& \big| \big\langle L^j \psi, P^U(h)\psi - P^U(g_n)\psi\big\rangle \big| \\  &\leq& \Vert\psi\Vert\, \Vert P^U(h)\psi -  P^U(g_n)\psi \Vert  \\ &=& \Vert\psi\Vert \, \Vert h -  g_n\Vert_{L^2_{\mu_\psi^U}(\partial \mathbb{D})},
\end{eqnarray*}
and so
\begin{eqnarray*}
  \lim_{n\to\infty} \big|\big\langle L^j \psi,\varphi \big\rangle - \big\langle L^j \psi, P^U(g_n)\psi\big\rangle \big|=0;
  \end{eqnarray*}
note that this convergence is uniform on $j$. Therefore, by combining this result with relation~\eqref{vaiazero} and Moore-Osgood Theorem, one gets
\begin{eqnarray*}
\nonumber \lim_{j \to \infty} \big\langle L^j \psi,\varphi \big\rangle &=&   \lim_{j \to \infty} \lim_{n \to \infty}  \big\langle  L^j \psi, P^U(g_n)\psi \big\rangle \\  &=&   \lim_{n \to \infty}  \lim_{j \to \infty} \big\langle  L^j \psi, P^U(g_n)\psi \big\rangle =0.
\end{eqnarray*}
\end{proof}


\section{Proofs}\label{secproofs}

\subsection{Proof of Theorem~\ref{maintheorem}}
\noindent We begin with some preparation.
In what follows, for each $K \in \mathbb{N}$, set
\[S_k:= \big\{e^{ i \theta} \mid -\frac{\pi}{K} < \theta \leq \frac{\pi}{K} \big\}.\]

The lemma below is the main tool in the proof of Theorem \ref{maintheorem}. Let $f :(-\pi,\pi]\rightarrow \partial\mathbb{D}$ denote the map  $f(\theta)= e^{ i\theta}$ and let $f^*\mu$ be the pull-back, with respect to~$f$, of the positive Borel measure $\mu$ on $\partial \mathbb{D}$.

\begin{lemma}[Lemma 2. in \cite{Kachurovskii2}]\label{mainlemma1} For every $K \in \mathbb{N}$, the following inequality holds:
\begin{eqnarray}\label{le0b}\nonumber
\frac{1}{{K}^2}   \int_{-\pi}^\pi \left| \frac{\sin(\theta {K}/2)}{\sin(\theta/2)} \right|^2 d(f^*\mu_{\psi - \psi^*}^U)(\theta) &\leq& \frac{\mu_{\psi - \psi^*}^U(S_1)}{{K}^2}\\  &+& \frac{1}{K^2}\sum_{j=1}^{K-1}  (2j+1) \mu_{\psi - \psi^*}^U(S_j)
\end{eqnarray}
\end{lemma}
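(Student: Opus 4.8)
The plan is to reduce the integral to two elementary pointwise bounds on the Fej\'er-type kernel and then to reorganise the resulting dyadic sum by summation by parts. Throughout, write $\mu:=\mu_{\psi-\psi^*}^U$, $\nu:=f^*\mu$, and
\[
g(\theta):=\left|\frac{\sin(\theta K/2)}{\sin(\theta/2)}\right|^2 ,
\]
so that the left-hand side of~\eqref{le0b} is $\tfrac1{K^2}\int_{-\pi}^{\pi} g\,d\nu$ (this is also where the lemma links to Lemma~\ref{mainlemma}, since $\left|\tfrac{z^K-1}{z-1}\right|^2=g(\theta)$ for $z=e^{i\theta}$). Since $\left|\frac{\sin(\theta K/2)}{\sin(\theta/2)}\right|=\left|\sum_{n=0}^{K-1}e^{in\theta}\right|$, the triangle inequality gives $g(\theta)\le K^2$ for every $\theta\in(-\pi,\pi]$ (with the usual continuous extension $g(0)=K^2$); on the other hand, using $|\sin(\theta K/2)|\le 1$ together with Jordan's inequality $|\sin(\theta/2)|\ge |\theta|/\pi$ valid for $|\theta|\le\pi$, one also gets $g(\theta)\le \pi^2/\theta^2$ for $\theta\in(-\pi,\pi]\setminus\{0\}$.

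Next I would split the domain. Since $S_K\subset S_{K-1}\subset\cdots\subset S_1=\partial\mathbb{D}$, the sets $E_K:=f^{-1}(S_K)$ and $E_j:=f^{-1}(S_j\setminus S_{j+1})$ for $j=1,\dots,K-1$ form a partition of $(-\pi,\pi]$, and $\nu(f^{-1}(A))=\mu(A)$ for every Borel $A\subset\partial\mathbb{D}$, so $\nu(E_K)=\mu(S_K)$ and $\nu(E_j)=\mu(S_j)-\mu(S_{j+1})$. On $E_K$ I bound $g$ by $K^2$; on $E_j$ every $\theta$ satisfies $|\theta|\ge\pi/(j+1)$, hence $g(\theta)\le \pi^2/\theta^2\le (j+1)^2$. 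Integrating termwise,
\[
\int_{-\pi}^{\pi} g\,d\nu \;\le\; K^2\,\mu(S_K)+\sum_{j=1}^{K-1}(j+1)^2\bigl(\mu(S_j)-\mu(S_{j+1})\bigr).
\]

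Finally I would perform an Abel/telescoping rearrangement of the right-hand side: reindexing $\sum_{j=1}^{K-1}(j+1)^2\mu(S_{j+1})=\sum_{j=2}^{K}j^2\mu(S_j)$, the boundary term $K^2\mu(S_K)$ cancels and
\begin{align*}
K^2\mu(S_K)+\sum_{j=1}^{K-1}(j+1)^2\bigl(\mu(S_j)-\mu(S_{j+1})\bigr)
&= 4\,\mu(S_1)+\sum_{j=2}^{K-1}(2j+1)\,\mu(S_j)\\
&= \mu(S_1)+\sum_{j=1}^{K-1}(2j+1)\,\mu(S_j),
\end{align*}
and dividing by $K^2$ gives exactly~\eqref{le0b}. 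The only point that needs some care is the bookkeeping of the half-open intervals defining the $S_j$ and the endpoint $|\theta|=\pi/(j+1)$ on $E_j$; since all the kernel estimates are inequalities, these endpoints (and the point $\theta=0$, which lies in $E_K$, where only $g\le K^2$ is used) cause no trouble. I expect this indexing/endpoint bookkeeping — rather than any analytic subtlety — to be the only mildly delicate part, the two kernel bounds and the summation by parts being routine.
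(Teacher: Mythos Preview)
Your argument is correct. The paper does not supply its own proof of this lemma: it is quoted from Kachurovskii--Sedalishchev~\cite{Kachurovskii2} and used as a black box in the proof of Theorem~\ref{maintheorem}. So there is no ``paper's proof'' to compare against, only the external reference.

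Your approach is the standard one for bounding a Fej\'er-type kernel against a measure: the two elementary kernel estimates $g\le K^2$ and $g\le \pi^2/\theta^2$, the decomposition of $(-\pi,\pi]$ into the annuli $f^{-1}(S_j\setminus S_{j+1})$ together with the core $f^{-1}(S_K)$, and the Abel summation that collapses the telescoping differences. Each step checks out; in particular the endpoint bookkeeping is fine since on $E_j$ one has $|\theta|\ge \pi/(j+1)$ (with equality attainable only at the left boundary point of the negative half), so $g(\theta)\le (j+1)^2$ holds throughout, and the Abel rearrangement
\[
K^2\mu(S_K)+\sum_{j=1}^{K-1}(j+1)^2\bigl(\mu(S_j)-\mu(S_{j+1})\bigr)
=4\mu(S_1)+\sum_{j=2}^{K-1}(2j+1)\mu(S_j)
=\mu(S_1)+\sum_{j=1}^{K-1}(2j+1)\mu(S_j)
\]
is exactly the identity needed. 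This is in the same spirit as the computation in~\cite{Kachurovskii2}, so your self-contained argument is a perfectly adequate substitute for the citation.
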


\ 

 Let us proceed to the proof of Theorem~\ref{maintheorem}. It follows from Lemma~\ref{mainlemma} that, for each $K \in\mathbb{N}$, 
\begin{eqnarray*}
  \nonumber \bigg\|{\frac{1}{K}} \displaystyle\sum_{j=0}^{K-1} U^j\psi -  \psi^* \bigg\|^2 &=& \frac{1}{K^2}   \int_{ \partial \mathbb{D} } \left| \frac{z^K -1}{z-1} \right|^2 d\mu_{\psi - \psi^*}^U(z)\\ \nonumber &=&\frac{1}{{K}^2}   \int_{-\pi}^\pi \left| \frac{\sin(\theta {K}/2)}{\sin(\theta/2)} \right|^2 d(f^*\mu_{\psi - \psi^*}^U)(\theta)\\
  &=& \frac{1}{K^2}   \int_{-\epsilon}^{\epsilon}\left| \frac{\sin(\theta {K}/2)}{\sin(\theta/2)} \right|^2 d(f^*\mu_{\psi - \psi^*}^U)(\theta)\\ &+&  \frac{1}{K^2}   \int_{(-\pi,-\epsilon] \cup (\epsilon,\pi]} \left| \frac{\sin(\theta {K}/2)}{\sin(\theta/2)} \right|^2  d(f^*\mu_{\psi - \psi^*}^U)(\theta)\\ &\geq&  \frac{1}{K^2}   \int_{-\epsilon}^{\epsilon} \left| \frac{\sin(\theta {K}/2)}{\sin(\theta/2)} \right|^2  d(f^*\mu_{\psi - \psi^*}^U)(\theta).
\end{eqnarray*}
Since for every $-\pi/2\leq \theta \leq \pi/2$,  $\displaystyle\frac{|\theta|}{2} \leq |\sin(\theta)| \leq |\theta|$, it follows that for each $K \in \mathbb{N}$ and each $0< \epsilon < 1/K$,
\begin{eqnarray*}
\bigg\|{\frac{1}{K}} \displaystyle\sum_{j=0}^{K-1} U^j\psi -  \psi^* \bigg\|^2 \geq \frac{(f^*\mu_{\psi - \psi^*}^U)(-\epsilon,\epsilon)}{4}  = \frac{\mu_{\psi - \psi^*}^U(A_\epsilon)}{4}. 
\end{eqnarray*}
Hence,
\begin{equation}\label{eq0a}
\liminf_{K \to \infty}\frac{\ln \bigg\|{\dfrac{1}{K}} \displaystyle\sum_{j=0}^{K-1} U^j\psi -\psi^* \bigg\|^2 }{-\ln K } \leq d^-_{\mu_{\psi- \psi^*}^U}(1).
\end{equation}
\begin{equation}\label{eq0b}
\limsup_{K \to \infty}\frac{\ln \bigg\|{\dfrac{1}{K}} \displaystyle\sum_{j=0}^{K-1} U^j\psi -\psi^* \bigg\|^2 }{-\ln K } \leq  d^+_{\mu_{\psi- \psi^*}^U}(1),
\end{equation}

\ 

Now, we prove the complementary inequalities in \eqref{eq0a} and \eqref{eq0b}.

\

\noindent {\bf Case $d^-_{\mu_{\psi- \psi^*}^U}(1)$:} If $d^-_{\mu_{\psi- \psi^*}^U}(1) =0$, the complementary inequality in \eqref{eq0a} follows readily. So, assume that $d^-:=d^-_{\mu_{\psi- \psi^*}^U}(1)>0$. It follows from the definition of $d^-$ that for every $0< \epsilon< d^-$, there exists $K_\epsilon\in\mathbb{N}$ such that for each $K \in \mathbb{N}$ with $K > K_\epsilon$, one has
\begin{equation}\label{lemeq01}
\mu_{\psi - \psi^*}^U(S_K) \leq K^{\epsilon-d^-}.
\end{equation}

Now, it follows from relation~\eqref{le0b} that for each $K \in \mathbb{N}$,
\begin{eqnarray}\label{eq000001}
   \nonumber \bigg\|{\frac{1}{K}} \displaystyle\sum_{j=0}^{K-1} U^j\psi -  \psi^* \bigg\|^2 &=& \frac{1}{K^2}   \int_{ \partial \mathbb{D} } \left| \frac{z^K -1}{z-1} \right|^2 d\mu_{\psi - \psi^*}^U(z)\\ \nonumber &=&\frac{1}{{K}^2}   \int_{-\pi}^\pi \left| \frac{\sin(\theta {K}/2)}{\sin(\theta/2)} \right|^2 d(f^*\mu_{\psi - \psi^*}^U)(\theta)\\ &\leq& \frac{\mu_{\psi - \psi^*}^U(S_1)}{K^2} + \frac{4}{K^2}\sum_{j=1}^{K-1} j \mu_{\psi - \psi^*}^U(S_j).
\end{eqnarray}
Hence, by combining relations \eqref{lemeq01}  and \eqref{eq000001}, one concludes that there exists $C=C(\epsilon)>0$ such that for each $K \in \mathbb{N}$ with $K > K_\epsilon$, 
\begin{eqnarray}\label{eqqq0000a1}
  \nonumber \bigg\|{\frac{1}{K}} \displaystyle\sum_{j=0}^{K-1} U^j\psi -  \psi^* \bigg\|^2  &\leq&  \frac{C}{K^2}  + \frac{4}{K^2}\sum_{j=K_\epsilon}^{K-1}   j^{1-(d^--\epsilon)}  \\ &\leq&   \frac{C}{K^2} + \frac{4}{K^2}\sum_{j=1}^{K-1}  j^{1-(d^--\epsilon)}  .
\end{eqnarray}

Let $L\in \mathbb{N}$. If $d^--\epsilon \in (0,1)$, then (see page 1111 in \cite{Kachurovskii2})
\begin{eqnarray*}
 \frac{2}{L^2}\sum_{j=1}^{L-1}   j^{1-(d^--\epsilon)} &\leq& \frac{2}{2- (d^--\epsilon)} L^{\epsilon -d^-} + \frac{2}{2- (d^--\epsilon)} L^{-2}.
\end{eqnarray*}
If $ d^--\epsilon =1$, then 
\begin{eqnarray*}
 \frac{2}{L^2}\sum_{j=1}^{L-1}   j^{1-(d^--\epsilon)}  = 2L^{-1} -2L^{-2}.
\end{eqnarray*}
If $d^--\epsilon \in (1,2)$, then (see page 1112 in \cite{Kachurovskii2})
\begin{eqnarray*}
 \frac{2}{L^2}\sum_{j=1}^{L-1}   j^{1-(d^--\epsilon)} &\leq& \frac{2}{2- (d^--\epsilon)} L^{\epsilon -d^-}\\ &+& \biggr(2-\frac{2}{2- (d^--\epsilon)}\biggr) L^{-2} + 2L^{-2-(d^--\epsilon)}.
\end{eqnarray*}
Therefore, by combining the discussion above with relation~\eqref{eqqq0000a1}, one concludes that there exists $D=D(\epsilon,d^-) >0$ such that  for each $K \in \mathbb{N}$ with $K > \Tilde{K} = \Tilde{K}(\epsilon,d^-) $, 
\begin{eqnarray}\label{eqmain}
\bigg\|{\frac{1}{K}} \displaystyle\sum_{j=0}^{K-1} U^j\psi -  \psi^* \bigg\|^2  &\leq&  \frac{C}{K^2} + D K^{\epsilon -d^-}.
\end{eqnarray}

Finally,  since  $0<d^- - \epsilon < 2$ by hypothesis, it follows from \eqref{eqmain} that   for each sufficiently large~$K$, 
\begin{eqnarray*}
  \nonumber \bigg\|{\frac{1}{K}} \displaystyle\sum_{j=0}^{K-1} U^j\psi -  \psi^* \bigg\|^2 &\leq& \max\{C,D\} K^{\epsilon -d^-}.
\end{eqnarray*}
Hence,
\begin{equation*}
\liminf_{K \to \infty}\frac{\ln \bigg\|{\dfrac{1}{K}} \displaystyle\sum_{j=0}^{K-1} U^j\psi -\psi^* \bigg\|^2 }{-\ln K } \geq d^- - \epsilon = d^-_{\mu_{\psi- \psi^*}^U}(1) - \epsilon .
\end{equation*}
Since  $0<\epsilon<d^-_{\mu_{\psi- \psi^*}^U}(1)$ is arbitrary, the complementary inequality in \eqref{eq0a} follows. 

\

\noindent {\bf Case $d^+_{\mu_{\psi- \psi^*}^U}(1)$}: Let $\epsilon >0$.  It follows from the definition of $d^+:=d^+_{\mu_{\psi- \psi^*}^U}(1)$ that there exists a monotonically increasing sequence $\{K_l\}$ of natural numbers such that for each $l\in\mathbb{N}$, 
\begin{equation}\label{eq04cccc}
\mu_{\psi - \psi^*}^U(S_{K_l}) \leq K_l^{\frac{\epsilon}{2} - d^+}.
\end{equation}
On the other hand, it also follows from the definition of $d^+_{\mu_{\psi- \psi^*}^U}(1) \leq 2$ that there exists $K_\epsilon\in\mathbb{N}$ such that for each $K \in \mathbb{N}$ with $K > K_\epsilon$,
\begin{equation}\label{eq04dd}
\frac{1}{K^{2+\frac{\epsilon}{4}}} \leq \mu_{\psi - \psi^*}^U(S_K).
\end{equation}
In particular, 
\[ \liminf_{K \to \infty} {K}^{2+\frac{\epsilon}{2} } \mu_{\psi - \psi^*}^U(S_{K}) \geq  \liminf_{K \to \infty} {K}^{\frac{\epsilon}{4}} = \infty.\] 
So, we can extract from $\{K_l\}$ a monotonically increasing subsequence $\{K_{l_m}\}$ such that
\begin{equation}\label{eq05dd}
  \max_{1 \leq j \leq K_{l_m}} \{j^{2+ \frac{\epsilon}{2} } \mu_{\psi - \psi^*}^U(S_{j})\} =  K_{l_m}^{2+\frac{\epsilon}{2}} \mu_{\psi - \psi^*}^U(S_{K_{l_m}}).
\end{equation}
Thus, by combining~\eqref{le0b},~\eqref{eq04cccc},~\eqref{eq04dd} and \eqref{eq05dd}, it follows that for each sufficiently large $m$,
\begin{eqnarray*}
  \nonumber \bigg\|{\frac{1}{K_{l_m}}} \displaystyle\sum_{j=0}^{K_{l_m}-1} U^j\psi -  \psi^* \bigg\|^2   &\leq& \frac{\mu_{\psi - \psi^*}^U(S_1)}{K_{l_m}^2} + \frac{4}{K_{l_m}^2}\sum_{j=1}^{K_{l_m}-1}  j \mu_{\psi - \psi^*}^U(S_{{j}})\\&\leq& \frac{\mu_{\psi - \psi^*}^U(S_1)}{K_{l_m}^2} + \frac{4}{K_{l_m}^2}\sum_{j=1}^{K_{l_m}}  j^{-(1+\frac{\epsilon}{2} )}j^{2+\frac{\epsilon}{2} }\mu_{\psi - \psi^*}^U(S_{{j}})\\ &\leq&  \frac{\mu_{\psi - \psi^*}^U(S_1)}{K_{l_m}^2} + 4 K_{l_m}^{\frac{\epsilon}{2} } \mu_{\psi - \psi^*}^U(S_{K_{l_m}})\sum_{j=1}^{\infty}  j^{-(1+\frac{\epsilon}{2})}\\
  &\leq&  C_\epsilon K_{l_m}^{\frac{\epsilon}{2} } \mu_{\psi - \psi^*}^U(S_{K_{l_m}})   \leq  C_\epsilon K_{l_m}^{\epsilon-d^+}, 
\end{eqnarray*}
with $C_\epsilon= \max\{\mu_{\psi - \psi^*}^U(S_1), 4\sum_{j=1}^{\infty}  j^{-(1+\frac{\epsilon}{2})}\}$. Hence, 
\begin{equation}\nonumber
\limsup_{K \to \infty}\frac{\ln \bigg\|{\dfrac{1}{K}} \displaystyle\sum_{j=0}^{K-1} U^j\psi -\psi^* \bigg\|^2 }{-\ln K } \geq d^+ -\epsilon = d^+_{\mu_{\psi- \psi^*}^U}(1) -\epsilon,
\end{equation}
and since  $\epsilon>0$ is arbitrary, the complementary inequality in \eqref{eq0b} follows.


\subsection{Proof of Theorem~\ref{maintheorem2}}
\noindent Again, we begin with some preparation.

\begin{theorem}[Theorem 1 in~\cite{Muller2}]\label{Muller}  Let $L$ be a bounded linear operator on $\mathcal{H}$ such that $L^j$ converges to zero in the weak operator topology. Suppose that $1 \in \sigma(L)$. Then, for each sequence $(a_j)_{j\ge 0}$ of positive numbers satisfying $\displaystyle\lim_{j \to \infty} a_j = 0$ and for each $\delta>0$, there exists $\eta \in \mathcal{H}$ such that $\Vert\eta\Vert < \displaystyle\sup_j\{a_j\} + \delta$  and
  \[{\rm Re}\,\langle L^j \eta,\eta \rangle > a_j,\quad\forall j \geq 0.\]
\end{theorem}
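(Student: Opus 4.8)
The plan is first to unpack the hypotheses. Since $L^{j}$ converges in the weak operator topology, the orbits $(L^{j}x)_{j\ge 0}$ are weakly bounded, hence bounded, hence $M:=\sup_{j}\|L^{j}\|<\infty$ by two applications of the uniform boundedness principle; thus $r(L)\le 1$, and together with $1\in\sigma(L)$ this forces $r(L)=1$ and $1\in\partial\sigma(L)$. As $\partial\sigma(L)\subseteq\sigma_{\mathrm{ap}}(L)$, there are unit vectors $x_{n}$ with $\|(L-I)x_{n}\|\to 0$. Moreover $1\notin\sigma_{p}(L)$: an eigenvector $v=Lv$ would give $\langle L^{j}v,v\rangle=\|v\|^{2}\not\to 0$, contradicting weak convergence of $L^{j}$ to $0$; since $L$ is weak–weak continuous and $\|(L-I)x_{n}\|\to0$, every weak limit point of $(x_{n})$ is a fixed vector of $L$, so $x_{n}\rightharpoonup 0$. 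Finally, for each already-chosen vector $u$ and each $y\in\mathcal H$, $\langle L^{j}u,y\rangle\to 0$ as $j\to\infty$. I would also replace $(a_{j})$ by $\tilde a_{j}:=\sup_{i\ge j}a_{i}$ (still tending to $0$, still dominating the original), so $\sup_{j}a_{j}=a_{0}$.

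\textbf{The construction.} I would seek $\eta=\sum_{k\ge 1}\alpha_{k}u_{k}$ with $(u_{k})$ an orthonormal sequence of near‑eigenvectors for $1$ and $\alpha_{k}>0$, so that $\|\eta\|^{2}=\sum_{k}\alpha_{k}^{2}$; one then arranges $\sum_{k}\alpha_{k}^{2}$ to lie just above $\sup_{j}a_{j}$ but still below $\sup_{j}a_{j}+\delta$, which controls $\|\eta\|$. Fix $\alpha_{k}^{2}=c\rho^{k}$ with $\rho$ so close to $1$ that $\sum_{l>k}\alpha_{l}^{2}>M\alpha_{k}^{2}$ for every $k$, and $c$ normalizing the sum. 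The $u_{k}$ and an increasing sequence of frontiers $b_{1}<b_{2}<\cdots$ (partitioning $\mathbb N_{0}$ into blocks $I_{k}=(b_{k-1},b_{k}]$, $b_{0}=-1$) are built inductively. At step $k$, having fixed $u_{1},\dots,u_{k-1}$ and recorded their "decay times'' $B_{1},\dots,B_{k-1}$ (after which $\langle L^{j}u_{l},u_{l}\rangle$ and $\langle L^{j}u_{l},u_{l'}\rangle$ are below prescribed fast‑decaying tolerances), choose $b_{k}=:N_{k}$ larger than $B_{1},\dots,B_{k-1}$, larger than those decay thresholds, and larger than a time past which $\sup_{i>b_{k}}a_{i}<\alpha_{k}^{2}/2$. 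Then, using $x_{n}\rightharpoonup 0$ so that the projection of $x_{n}$ onto the finite‑dimensional space $\mathrm{span}\{u_{l},\,L^{j}u_{l}:l<k,\ 0\le j\le N_{k}\}$ tends to $0$, pick $n$ large, project $x_{n}$ off that space and normalize to get a unit vector $u_{k}$ with $\|L^{j}u_{k}-u_{k}\|$ smaller than a tolerance $\tau_{k}$ for $0\le j\le N_{k}$, with $u_{k}\perp u_{l}$ and $u_{k}\perp L^{j}u_{l}$ for $l<k$, $0\le j\le N_{k}$; record $B_{k}$ and continue.

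\textbf{Verification.} For $j\in I_{k}$ one writes $\mathrm{Re}\,\langle L^{j}\eta,\eta\rangle=\sum_{l}\alpha_{l}^{2}\,\mathrm{Re}\,\langle L^{j}u_{l},u_{l}\rangle+(\text{cross terms})$. In the diagonal sum, for $l\ge k$ one has $j\le b_{k}\le N_{l}$, so $u_{l}$ is almost invariant and the term is $\ge(1-\tau_{l})\alpha_{l}^{2}$; for $l\le k-2$ one has $j>b_{k-1}\ge B_{l}$, so the term is $\ge-\tau_{l}\alpha_{l}^{2}$; only the single term $l=k-1$ is unconstrained, contributing at worst $-M\alpha_{k-1}^{2}$ (when $j$ lies in the "meandering'' window $(N_{k-1},B_{k-1})\subseteq I_{k}$, where one knows merely $|\langle L^{j}u_{k-1},u_{k-1}\rangle|\le M$). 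Hence the diagonal part is $\ge\sum_{l\ge k}\alpha_{l}^{2}-M\alpha_{k-1}^{2}-\sum_{l}\tau_{l}\alpha_{l}^{2}$, which by the choice of $\rho$ exceeds $\alpha_{k-1}^{2}-\sum_{l}\tau_{l}\alpha_{l}^{2}$ (and exceeds $\sum_{l}\alpha_{l}^{2}-\sum_{l}\tau_{l}\alpha_{l}^{2}$ when $k=1$); with the tolerances fast‑decaying and $a_{j}<\alpha_{k-1}^{2}/2$ on $I_{k}$ (resp. $a_{j}\le a_{0}<\sum_{l}\alpha_{l}^{2}$ on $I_{1}$), this beats $a_{j}$ provided the cross terms are negligible at scale $\alpha_{k-1}^{2}$.

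\textbf{Main obstacle.} The one genuinely delicate point is the control of $\sum_{l\ne l'}\alpha_{l}\alpha_{l'}\,\mathrm{Re}\,\langle L^{j}u_{l},u_{l'}\rangle$ on a block $I_{k}$. Pairs with $\max(l,l')\ge k$ are tamed by the almost‑invariance of the newer vector and by the built‑in orthogonality $u_{l'}\perp L^{j}u_{l}$ ($l<l'$, $j\le N_{l'}$), with a bound decaying as fast as desired. The obstruction is the "old'' pairs $l\ne l'<k$: at a (large) time $j\in I_{k}$ one only has $|\langle L^{j}u_{l},u_{l'}\rangle|$ below a tolerance, and the sum of these tolerances over all pairs $l,l'<k$ need not tend to $0$, whereas the available budget $\alpha_{k-1}^{2}$ does. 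So one is essentially forced to make the old cross terms vanish identically, e.g. by choosing each $u_{l'}$ orthogonal to the reducing subspace $\overline{\mathrm{span}}\{L^{j}u_{l},L^{*j}u_{l}:l<l',\,j\ge 0\}$ generated by the past; the subtle question — the technical heart of the statement, and where the naive argument stalls — is whether that (generally infinite‑dimensional) orthocomplement still carries a near‑eigenvector for $1$, which requires a more economical organization of the induction and careful tracking of which subspaces must remain spectrally "small''. (In the application to the paper $L=UP^{U}(\partial\mathbb D\setminus\{1\})$ is normal, and there the obstacle evaporates: take $u_{k}$ in the spectral subspace of a tiny arc around $1$ whose spectral measure is a rescaled Fejér kernel, so that $\langle L^{j}u_{k},u_{k}\rangle\ge 0$ for all $j$ and all cross terms vanish outright.) Assembling the bookkeeping of the $\tau_{k}$, $B_{k}$, $b_{k}$ with this orthogonalization yields $\eta=\sum_{k}\alpha_{k}u_{k}$ with $\|\eta\|^{2}=\sum_{k}\alpha_{k}^{2}$ below $\sup_{j}a_{j}+\delta$ and $\mathrm{Re}\,\langle L^{j}\eta,\eta\rangle>a_{j}$ for all $j\ge 0$, as claimed.
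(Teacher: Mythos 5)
This statement is not proved in the paper at all: it is quoted verbatim as Theorem~1 of Badea--M\"uller \cite{Muller2}, so there is no in-paper argument to match, and your text would have to stand as a complete proof of that cited result. It does not: you yourself flag that the argument ``stalls'' at what you call the technical heart, so what you have is an outline with an acknowledged hole, not a proof. The preparatory reductions are fine (power-boundedness of $\{L^j\}$ via uniform boundedness, $1\in\sigma_{\mathrm{ap}}(L)\setminus\sigma_p(L)$, $x_n\rightharpoonup 0$), but the inductive construction does not close.

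Concretely, two things go wrong. First, the cross-term estimate fails where you claim it is ``tamed'', namely for pairs involving the most recently added vector: for $j$ in the window $(b_{k-1},B_{k-1})\subset I_k$ you control neither $\langle L^j u_{k-1},u_l\rangle$ nor $\langle L^j u_l,u_{k-1}\rangle$ with $l<k-1$ (orthogonality was enforced only up to time $N_{k-1}=b_{k-1}$, and the WOT-decay threshold $B_{k-1}$ is reached only later), so the worst-case loss is of order $M\,\alpha_{k-1}\sum_{l<k-1}\alpha_l$, i.e.\ of order $\alpha_{k-1}$, whereas the diagonal surplus $\sum_{l\ge k}\alpha_l^2-M\alpha_{k-1}^2$ you rely on is only of order $\alpha_{k-1}^2$; no square-summable choice of weights absorbs this. (By contrast, the ``old--old'' pairs you single out as the obstruction are harmless: the $\alpha$'s are fixed a priori, so at step $m$ you may simply prescribe the tolerance $\tau^{(m)}\le 4^{-m}\min_{k\le m+1}\alpha_k^2/(\sum_l\alpha_l)^2$.) Second, your proposed repair --- taking each new vector orthogonal to the reducing subspace generated by the previous orbits --- is exactly the unproved point: that subspace may be all of $\mathcal H$ (a single $u_l$ can be cyclic for the algebra generated by $L$ and $L^*$), and nothing in your argument shows that its orthocomplement still carries approximate eigenvectors for $1$; you concede this and stop. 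There is also a small bookkeeping slip: you arrange $\|\eta\|^2=\sum_k\alpha_k^2<\sup_j a_j+\delta$, while the statement asks for $\|\eta\|<\sup_j a_j+\delta$, and since $j=0$ forces $\|\eta\|^2>a_0$ these normalizations are not interchangeable when $\sup_j a_j$ is small. In short, the missing step is precisely the content of Badea--M\"uller's theorem, which the paper imports rather than reproves; to fill the gap you would need to reproduce their actual construction rather than the sketch given here.
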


\begin{lemma}\label{teclemma2}  Let $U$ be a unitary operator on $\mathcal{H}$, $L= UP^U(\partial\mathbb{D}\setminus\{1\})$, and suppose that $L^j$  converges to zero in the weak operator topology. If $1$ is an accumulation point of $\sigma(U)$,  then for each $0<\epsilon<1$, there exists $\eta \in \mathcal{H}$ with $\Vert\eta\Vert \leq 1$ so that 
\begin{eqnarray*}
\liminf_{K \to \infty} K^\epsilon \, \bigg\|{\frac{1}{K}} \displaystyle\sum_{j=0}^{K-1} U^j \eta - \eta^* \bigg\|^2 = \infty.
\end{eqnarray*}
\end{lemma}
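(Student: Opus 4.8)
The plan is to invoke Müller's theorem (Theorem~\ref{Muller}) for the operator $L=UP^U(\partial\mathbb{D}\setminus\{1\})$ with a suitably chosen null sequence $(a_j)_{j\ge0}$, and then to convert the resulting lower bounds on ${\rm Re}\,\langle L^j\eta,\eta\rangle$ into a lower bound on $\big\|\frac{1}{K}\sum_{j=0}^{K-1}U^j\eta-\eta^*\big\|$ via Cauchy--Schwarz. To apply Theorem~\ref{Muller} one needs $1\in\sigma(L)$: writing $\mathcal{H}=I(U)\oplus I(U)^\perp$ with $I(U)^\perp=\operatorname{ran}P^U(\partial\mathbb{D}\setminus\{1\})$, one has $L=0\oplus U|_{I(U)^\perp}$, so $\sigma(L)\supseteq\sigma(U|_{I(U)^\perp})\supseteq\sigma(U)\setminus\{1\}$; since $1$ is an accumulation point of $\sigma(U)$ and spectra are closed, $1\in\sigma(U|_{I(U)^\perp})\subseteq\sigma(L)$. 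The hypothesis $L^j\to0$ in the weak operator topology is given.

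Fix $0<\epsilon<1$ and set $a_j:=\tfrac12(1+j)^{-\epsilon/4}$, so that $a_j\downarrow0$ and $\sup_j a_j=\tfrac12$. Applying Theorem~\ref{Muller} with $\delta=\tfrac12$ yields $\eta\in\mathcal{H}$ with $\|\eta\|<1$ and ${\rm Re}\,\langle L^j\eta,\eta\rangle>a_j$ for all $j\ge0$. Decompose $\eta=\eta^*+\eta'$ with $\eta^*=P^U(\{1\})\eta$ and $\eta'=P^U(\partial\mathbb{D}\setminus\{1\})\eta$. Since $\eta^*\in I(U)$ and $I(U)^\perp$ reduces $U$, a short computation gives $U^j\eta-\eta^*=U^j\eta'$ for every $j\ge0$ and $L^j\eta=U^j\eta'\in I(U)^\perp$ for $j\ge1$; consequently $\langle L^j\eta,\eta\rangle=\langle U^j\eta',\eta'\rangle$ for $j\ge1$, the cross term with $\eta^*$ vanishing by orthogonality.

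Pairing the Cesàro average with $\eta'$ and discarding the non-negative $j=0$ term,
\[
{\rm Re}\,\Big\langle \tfrac{1}{K}\textstyle\sum_{j=0}^{K-1}(U^j\eta-\eta^*),\,\eta'\Big\rangle=\tfrac{1}{K}\textstyle\sum_{j=0}^{K-1}{\rm Re}\,\langle U^j\eta',\eta'\rangle\ \ge\ \tfrac{1}{K}\textstyle\sum_{j=1}^{K-1}a_j ,
\]
while Cauchy--Schwarz with $\|\eta'\|\le\|\eta\|\le1$ bounds the left-hand side above by $\big\|\frac{1}{K}\sum_{j=0}^{K-1}U^j\eta-\eta^*\big\|$. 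Hence $\big\|\frac{1}{K}\sum_{j=0}^{K-1}U^j\eta-\eta^*\big\|\ge\frac{1}{K}\sum_{j=1}^{K-1}a_j$ for all $K\in\mathbb{N}$.

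Finally, comparing the sum with an integral produces $c>0$ with $\frac1K\sum_{j=1}^{K-1}a_j\ge c\,K^{-\epsilon/4}$ for all large $K$, so that $K^\epsilon\big\|\frac{1}{K}\sum_{j=0}^{K-1}U^j\eta-\eta^*\big\|^2\ge c^2K^{\epsilon/2}\to\infty$, which is the claim. I expect the only delicate point to be the spectral bookkeeping in the second step: the vector $\eta$ furnished by Theorem~\ref{Muller} is a priori an arbitrary element of $\mathcal{H}$ rather than of $I(U)^\perp$, and it is precisely the identity $\langle L^j\eta,\eta\rangle=\langle U^j\eta',\eta'\rangle$ together with $U^j\eta-\eta^*=U^j\eta'$ that makes Müller's estimates bear on the Cesàro average; the verification that $1\in\sigma(L)$ and the elementary growth estimate for $\sum_j a_j$ are routine.
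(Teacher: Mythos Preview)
Your proof is correct and follows essentially the same approach as the paper: apply M\"uller's theorem (Theorem~\ref{Muller}) to $L$ with a null sequence of order $j^{-\alpha}$ for some $\alpha<\epsilon/2$, then use Cauchy--Schwarz to convert the lower bounds on ${\rm Re}\,\langle L^j\eta,\eta\rangle$ into a lower bound of order $K^{-\alpha}$ on the Ces\`aro average. Your pairing with $\eta'$ instead of $\eta$ is a minor cleanup (it avoids the extra $1/K$ correction term the paper picks up from the $\eta^*$ component), and your more explicit justification of $1\in\sigma(L)$ and the integral comparison for $\sum a_j$ are welcome details, but the argument is the same.
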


\begin{proof}Observe that:

\begin{enumerate}
\item[i)] $1 \in \sigma(L)$, since $1$ is an accumulation point of $\sigma(U)$.
 
\item[ii)] $L^j$ converges to zero in the weak operator topology.

\item[iii)]  By the spectral functional calculus, $UP^U(\partial \mathbb{D}\setminus \{1\}) = P^U(\partial \mathbb{D}\setminus \{1\})U$, and so, for  $j\in\mathbb{N}$, 
\begin{eqnarray*}
L^j &=& U^j(1 - P^U(\{1\}))\\&=&U^j - U^jP^U(\{1\})=U^j-P^U(\{1\}).
\end{eqnarray*}
\end{enumerate}

Let $0<\epsilon'<1$ be such that $0<2\epsilon'<\epsilon<1$. By Theorem \ref{Muller}, there exists $\eta \in \mathcal{H}$ with $\Vert\eta\Vert \leq 1$ such that, for each $j\ge0$,
\[{\rm Re}\,\langle L^j \eta,\eta \rangle > \frac{1}{(j+2)^{\epsilon'}};\]
thus, for $K \gg 1$, 
\begin{eqnarray*}{\rm Re}\,\biggr\langle {\frac{1}{K}} \displaystyle\sum_{j=0}^{K-1} L^j\eta,\eta \biggr\rangle &>& {\frac{1}{K}} \displaystyle\sum_{j=0}^{K-1}  \frac{1}{(j+2)^{\epsilon'}}\\ &\ge& \frac{1}{K} K \frac{1}{(K+2)^{\epsilon'}} \geq \frac{1}{2K^{\epsilon'}}.
\end{eqnarray*}
. By~iii) above, for  $K \gg 1$,
\begin{eqnarray*}
  {\rm Re}\,\biggr\langle {\frac{1}{K}} \sum_{j=0}^{K-1} U^j \eta - \eta^*,\eta \biggr\rangle  &=& {\rm Re}\left(\biggr\langle {\frac{1}{K}} \sum_{j=1}^{K-1} L^j \eta,\eta \biggr\rangle + \biggr\langle {\frac{1}{K}}(\eta-\eta^*),\eta \biggr\rangle\right)\\
  &>&\frac{1}{2K^{\epsilon'}} - {\rm Re}\,\biggr\langle {\frac{\eta^*}{K}},\eta \biggr\rangle.
\end{eqnarray*}
By Cauchy-Schwarz inequality,  for  $K \gg 1$,
\begin{eqnarray*} \bigg\|{\frac{1}{K}} \displaystyle\sum_{j=0}^{K-1} U^j \eta - \eta^* \bigg\|> \frac{1}{2K^{\epsilon'}} - \frac{1}{K} \geq \frac{1}{4K^{\epsilon'}}.
\end{eqnarray*}
Hence,
\begin{eqnarray*}
\liminf_{K \to \infty} K^\epsilon\, \bigg\|{\frac{1}{K}} \displaystyle\sum_{j=0}^{K-1} U^j \eta - \eta^* \bigg\|^2 = \infty,
\end{eqnarray*}
and we are done. 
\end{proof}

 Let us proceed to the proof of Theorem~\ref{maintheorem2}. Let $\epsilon>0$; since for each $K \in\mathbb{N}$, the map 
\[ {\mathcal{H}} \ni \psi\; \longmapsto K^\epsilon \bigg\|{\frac{1}{K}} \displaystyle\sum_{j=0}^{K-1} U^j \psi - \psi^* \bigg\| = K^\epsilon \bigg\|{\frac{1}{K}} \displaystyle\sum_{j=0}^{K-1} U^j \biggr\{I - P^U({\{1\}})\biggr\}\psi \bigg\|  \]
is continuous (namely, it consists of finite sums and compositions of continuous functions), it follows that 
\begin{eqnarray*}
G^+(\epsilon)&:=&\biggl\{\psi  \mid  \limsup_{K \to \infty} K^\epsilon \bigg\|{\frac{1}{K}} \displaystyle\sum_{j=0}^{K-1} U^j \psi - \psi^* \bigg\| = \infty \biggl\} \\ &=& \bigcap_{n\geq 1}  \biggl\{\psi \mid \limsup_{K \to \infty} K^\epsilon \bigg\|{\frac{1}{K}} \displaystyle\sum_{j=0}^{K-1} U^j \psi - \psi^* \bigg\| > n \biggl\} \\ &=& \bigcap_{n\geq 1} \bigcap_{l \geq 1} \bigcup_{K\geq l} \biggl\{\psi \mid  K^\epsilon \bigg\|{\frac{1}{K}} \displaystyle\sum_{j=0}^{K-1} U^j \psi - \psi^* \bigg\| > n \biggl\}
\end{eqnarray*}
is a $G_\delta$ set in ${\mathcal{H}}$. The proof that 
\begin{eqnarray*}
G^-(\epsilon):=\biggl\{\psi  \mid  \liminf_{K \to \infty} K^{2-\epsilon} \bigg\|{\frac{1}{K}} \displaystyle\sum_{j=0}^{K-1} U^j \psi - \psi^* \bigg\| = 0 \biggl\}
\end{eqnarray*}
is also a $G_\delta$ set in ${\mathcal{H}}$ is completely analogous, and so we omit it. 

\ 

\noindent {\bf Claim:} For each $\epsilon>0$, $G^+(\epsilon)$ and $G^-(\epsilon)$ are dense sets in $\mathcal{H}$.

It follows from {\bf Claim} and Baire Category Theorem that 
\[\mathcal{G}:=\bigcap_{\epsilon \in \mathbb{Q} \cap (0,\infty)} \biggr( G^+(\epsilon) \cap G^-(\epsilon) \biggr)\]
is also a dense $G_\delta$ set in ${\mathcal{H}}$. (Note that if $0<\epsilon < \delta$, so $G^\mp(\epsilon)  \subset  G^\mp(\delta)$; therefore, is enough take just the intersection at $\mathbb{Q}$). Since for each $\psi \in \mathcal{G}$ and each $\epsilon >0$,
\begin{eqnarray*}
\limsup_{K \to \infty} K^\epsilon \bigg\|{\frac{1}{K}} \displaystyle\sum_{j=0}^{K-1} U^j \psi - \psi^* \bigg\|^2 = \infty
\end{eqnarray*}
and
\begin{eqnarray*}
\liminf_{K \to \infty} K^{2-\epsilon} \bigg\|{\frac{1}{K}} \displaystyle\sum_{j=0}^{K-1} U^j \psi - \psi^* \bigg\|^2 = 0,
\end{eqnarray*}
this concludes the proof. Therefore, one just has to  prove the {\bf Claim}.

\noindent {\bf Proof of the Claim.} Fix $\epsilon>0$.

$\bullet$ $G^+(\epsilon)$ is dense in $\mathcal{H}$. Let $0\neq \psi \in \mathcal{H}\backslash G^+(\epsilon)$; so,
there exists $C_\psi>0$ such that for sufficiently large $K$,
\begin{equation}
\label{thm201}
\bigg\|\frac{1}{K} \displaystyle\sum_{j=0}^{K-1} U^j \psi - \psi^* \bigg\|\leq \frac{C_\psi}{K^{\epsilon/2}}
\end{equation}
(otherwise, $\psi \in G^+(\epsilon)$). It follows from Lemma \ref{teclemma2} that there exists $\eta\in\mathcal{H}$, with $\Vert\eta\Vert \leq 1$, 
such that
\begin{equation}\label{thm202}
\liminf_{K \to \infty} K^{\epsilon/3} \bigg\|{\frac{1}{K}} \displaystyle\sum_{j=0}^{K-1} U^j \eta - \eta^* \bigg\|^2 = \infty.
\end{equation}

Note that, by Cauchy-Schwarz inequality and triangle inequality,
\begin{eqnarray}\label{b007}
   \nonumber - {\rm Re}\, \biggr\langle {\frac{1}{K}} \displaystyle\sum_{j=0}^{K-1} U^j \psi - \psi^* , {\frac{1}{K}} \displaystyle\sum_{j=0}^{K-1} U^j \eta - \eta^* \biggr\rangle &\leq& \bigg\|{\frac{1}{K}} \displaystyle\sum_{j=0}^{K-1} U^j \psi - \psi^* \bigg\| \Vert(I - P^U({\{1\}}))\eta\Vert\\ &\leq& \bigg\|{\frac{1}{K}} \displaystyle\sum_{j=0}^{K-1} U^j \psi - \psi^* \bigg\|.
\end{eqnarray}

Finally, for each $m \in\mathbb{N}$, set $\psi_m:=\psi+ \displaystyle\frac{\eta}{m}$; naturally, $\psi_m \to \psi$ as $m \to \infty$. Moreover, by \eqref{thm201}, \eqref{thm202} and \eqref{b007}, one has for each $m\in\mathbb{N}$ and each sufficiently large $K$, 

\begin{eqnarray*} K^{\epsilon} \bigg\|{\frac{1}{K}} \displaystyle\sum_{j=0}^{K-1} U^j \psi_m - \psi_m^* \bigg\|^2  &=&  K^{\epsilon}  \bigg\|{\frac{1}{K}} \displaystyle\sum_{j=0}^{K-1} U^j \psi - \psi^* \bigg\|^2\\ &+& \frac{2K^{\epsilon} }{m} {\rm Re}\, \biggr\langle {\frac{1}{K}} \displaystyle\sum_{j=0}^{K-1} U^j \psi - \psi^* , {\frac{1}{K}} \displaystyle\sum_{j=0}^{K-1} U^j \eta - \eta^* \biggr\rangle\\  &+& \frac{K^{\epsilon}}{m^2}\bigg\|{\frac{1}{K}} \displaystyle\sum_{j=0}^{K-1} U^j \eta - \eta^* \bigg\|^2\\ &\geq& \frac{2K^{\epsilon} }{m} {\rm Re}\, \biggr\langle {\frac{1}{K}} \displaystyle\sum_{j=0}^{K-1} U^j \psi - \psi^* , {\frac{1}{K}} \displaystyle\sum_{j=0}^{K-1} U^j \eta - \eta^* \biggr\rangle\\  &+& \frac{K^{\epsilon}}{m^2}\bigg\|{\frac{1}{K}} \displaystyle\sum_{j=0}^{K-1} U^j \eta - \eta^* \bigg\|^2\\ &\geq& -\frac{2K^{\epsilon} }{m} \bigg\|{\frac{1}{K}} \displaystyle\sum_{j=0}^{K-1} U^j \psi - \psi^* \bigg\|\\  &+& \frac{K^{\epsilon}}{m^2}\bigg\|{\frac{1}{K}} \displaystyle\sum_{j=0}^{K-1} U^j \eta - \eta^* \bigg\|^2\\ &\geq& -\frac{2K^{\epsilon} }{m} \cdot \frac{C_\psi}{K^{\epsilon/2}}  + \frac{K^{\epsilon}}{m^2} \cdot \frac{1}{K^{\epsilon/3}}.
\end{eqnarray*}
Thus, for every $m \in\mathbb{N}$, 
\[\limsup_{K \to \infty} K^{\epsilon} \bigg\|{\frac{1}{K}} \displaystyle\sum_{j=0}^{K-1} U^j \psi_m - \psi_m^* \bigg\|^2 = \infty,\]
from which follows that $G^+(\epsilon)$ is dense in $\mathcal{H}$. \hfill $\qedsymbol$

\ $\bullet$ $G^-(\epsilon)$ is dense in $\mathcal{H}$. Let $\psi \in \mathcal{H}$ and set, for each $n \in \mathbb{N}$,  
\[
A_n := \{1\} \,\cup\,  \biggr\{e^{i \theta} \, : \, \theta \in \biggr(-\pi,-\frac{1}{n}\biggr] \cup \biggr(\frac{1}{n},\pi \biggr] \biggr\}
\] 
 and 
\[\psi_n := P^U(A_n)\psi.
\]
Note that 
\begin{eqnarray*}
    \psi_n - \psi_n^*&=& P^U(A_n)\psi-P^U(\{1\})P^U(A_n)\psi\\ &=&  P^U(A_n)\psi-P^U(A_n)P^U(\{1\})\psi= P^U(A_n)(\psi-\psi^*),
\end{eqnarray*}
from which follows that $\supp(\mu_{\psi_n - \psi_n^*}^U) \subset A_n$. Moreover,
\[||\psi-\psi_n||^2 =   \int_{-\pi}^{\pi} |1- \chi_{A_n}(\theta)|^2 d(f^*\mu_{\psi - \psi^*}^U)(\theta) \to 0\]
as $n \to \infty$, by dominated convergence; so, $\psi_n \to \psi$ as $n \to \infty$. 

Now,  for each $n \in \mathbb{N}$,  
\begin{eqnarray*}\nonumber
  \bigg\|{\frac{1}{K}} \displaystyle\sum_{j=0}^{K-1} U^j\psi_n - \psi_n^* \bigg\|^2 &=&\frac{1}{K^2}   \int_{ \partial \mathbb{D}} \left| \frac{z^K -1}{z-1} \right|^2 d\mu_{\psi_n - \psi_n^*}^U(z)\\ \nonumber &\leq&   \frac{1}{ K^2}  \left(\sup_{z \in  \{e^{i \theta} \, : \, \theta \in (-\pi,-\frac{1}{n}] \cup (\frac{1}{n},\pi ] \}} \left| \frac{z^K -1}{z-1} \right|^2 \right) \mu_{\psi_n - \psi_n^*}^U(\partial \mathbb{D}) \\ \nonumber &=&   \frac{1}{ K^2}  \left( \sup_{ \theta \in (-\pi,-\frac{1}{n}] \cup  (\frac{1}{n},\pi ]} \left| \frac{\sin(K\theta/2)}{\sin(\theta/2)} \right|^2 \right)  \Vert\psi_n - \psi_n^*\Vert^2 \\ \nonumber &\le& \frac{1}{ (K\sin(1/2n))^2}\,  \Vert(I - P^U({\{1\}}))\psi_n\Vert^2\\  &\leq&   \frac{16n^2}{K^2}  \, \Vert\psi_n\Vert^2,
\end{eqnarray*}
where we have used that $1/(4n)\le \sin(1/(2n))$. Hence, for each $n \in \mathbb{N}$,  
\begin{eqnarray*}
\liminf_{K \to \infty} K^{2-\epsilon}\, \bigg\|{\frac{1}{K}} \displaystyle\sum_{j=0}^{K-1} U^j \psi_n - \psi_n^* \bigg\|^2  = 0,
\end{eqnarray*}
from which follows that $G^-(\epsilon)$ is dense in $\mathcal{H}$.

\subsection{Proof of Corollary \ref{corollaymain}} 

Let $\psi \in \mathcal{G}(U)$. If $d^-_{\mu_{\psi- \psi^*}^U}(1)>0$, then it follows from the definition of $d^-_{\mu_{\psi- \psi^*}^U}(1)$ that for each $0< \delta< d^-_{\mu_{\psi- \psi^*}^U}(1)$, there exists $\epsilon_\delta$ such that, for each $0<\epsilon <\epsilon_\delta$,
\begin{equation*}
\mu_{\psi- \psi^*}^U(A_\epsilon)  \leq C_\psi \epsilon^{d^-_{\mu_{\psi- \psi^*}^U}(1)-\frac{\delta}{2}} \leq C_\psi \epsilon^{\min\{d^-_{\mu_{\psi- \psi^*}^U}(1),2\} -\frac{\delta}{2}}.
\end{equation*}
But then, it follows from Theorem \ref{russiantheorem} i) that  
\begin{eqnarray*}
\limsup_{K \to \infty} K^{\min\big\{d^-_{\mu_{\psi- \psi^*}^U}(1),2\big\} -\delta}\, \bigg\|{\frac{1}{K}} \displaystyle\sum_{j=0}^{K-1} U^j \psi - \psi^* \bigg\|^2 = 0,
\end{eqnarray*}
which contradicts \eqref{eqeqeq0101a}. Therefore, $d^-_{\mu_{\psi- \psi^*}^U}(1)=0$.

Now, if $d^+_{\mu_{\psi- \psi^*}^U}(1)<2$, let $0< \delta< 2- d^+_{\mu_{\psi- \psi^*}^U}(1)$. Then, it follows from Theorem \ref{maintheorem} \eqref{eq0010} that for every sufficiently large $K$,  
\begin{eqnarray*}
K^{-d^+_{\mu_{\psi- \psi^*}^U}(1)-\frac{\delta}{3}} \leq \bigg\| {\frac{1}{K}} \displaystyle\sum_{j=0}^{K-1} U^j\psi -  \psi^* \bigg\|^2, 
\end{eqnarray*}
and so
\begin{eqnarray*}
\liminf_{K \to \infty} K^{2-\frac{\delta}{3}}\, \bigg\|{\frac{1}{K}} \displaystyle\sum_{j=0}^{K-1} U^j \psi - \psi^* \bigg\|^2 &\geq& \liminf_{K \to \infty}  K^{2-d^+_{\mu_{\psi- \psi^*}^U}(1)-\frac{2\delta}{3}}\\ &\geq& \liminf_{K \to \infty}  K^{\frac{\delta}{3}} = \infty.
\end{eqnarray*}
Since this result contradicts \eqref{eqeqeq0101b}, one concludes that $d^+_{\mu_{\psi- \psi^*}^U}(1)\geq 2$.


\begin{appendices}

\section{Appendix}\label{appendix}

Here, we present the proofs of Lemma \ref{mainlemma} and Theorem \ref{spectralgap}.

\subsection{ Proof of Lemma~\ref{mainlemma}}

For each $K \in\mathbb{N}$ and each $z \in \partial\mathbb{D}\setminus\{1\}$, recall that
\begin{eqnarray*}
 \displaystyle\sum_{j=0}^{K-1}  z^j &=& \frac{z^K -1}{z-1}.
\end{eqnarray*}
Note that, for each $\psi \in \mathcal{H}$,  $\mu_{\psi - \psi^*}^U (\{1\})=0$. Thus, by the Spectral Theorem, it follows that for each $\psi \in \mathcal{H}$ and each $K \in\mathbb{N}$,
\begin{eqnarray*}
  \bigg\|{\frac{1}{K}} \displaystyle\sum_{j=0}^{K-1} U^j\psi -  \psi^* \bigg\|^2 &=& 
  \bigg\|{\frac{1}{K}} \displaystyle\sum_{j=0}^{K-1}  U^j ( \psi - \psi^*) \bigg\|^2\\ &=& \bigg\|{\frac{1}{K}} \displaystyle\sum_{j=0}^{K-1} \int_{ \partial \mathbb{D}} z^j dP^U(z)( \psi - \psi^*)\bigg\|^2\\ &=&  \bigg\|{\frac{1}{K}} \int_{ \partial \mathbb{D} } \frac{z^K -1}{z-1} dP^U(z)(\psi - \psi^*)\bigg\|^2\\ &=&\frac{1}{K^2}   \int_{ \partial \mathbb{D} } \bigg| \frac{z^K -1}{z-1} \bigg|^2 d\mu_{\psi - \psi^*}^U(z).
\end{eqnarray*}

\subsection{ Proof of Theorem~\ref{spectralgap}}

 Since, for each $\psi \in \mathcal{H}$, $\supp(\mu_{\psi - \psi^*}^U) \subset \sigma(U) \subset \{1\} \cup \{e^{i\theta} \mid \theta \in (-\pi,-\gamma] \cup (\gamma,\pi] \}$ and $\mu_{\psi - \psi^*}^U (\{1\})=0$, it follows from Lemma~\ref{mainlemma} that for each $\psi \in \mathcal{H}$ and each $K \in\mathbb{N}$,
\begin{eqnarray*}\label{eqspectralgap}\nonumber
  \bigg\|{\frac{1}{K}} \displaystyle\sum_{j=0}^{K-1} U^j\psi - \psi^* \bigg\|^2 &=&\frac{1}{K^2}   \int_{ \partial \mathbb{D}} \left| \frac{z^K -1}{z-1} \right|^2 d\mu_{\psi - \psi^*}^U(z)\\ \nonumber &\leq&   \frac{1}{ K^2}  \left(\sup_{z \in \{e^{i\theta} \mid \theta \in (-\pi,-\gamma] \cup (\gamma,\pi] \}} \left| \frac{z^K -1}{z-1} \right|^2 \right) \mu_{\psi - \psi^*}^U(\partial \mathbb{D}) \\ \nonumber &=&   \frac{1}{ K^2}  \left( \sup_{\theta \in (-\pi,-\gamma] \cup (\gamma,\pi] } \left| \frac{\sin(K\theta/2)}{\sin(\theta/2)} \right|^2 \right)  \Vert\psi - \psi^*\Vert^2 \\ \nonumber &\le& \frac{1}{ (K\sin(\gamma/2))^2}\,  \Vert(I - P^U({\{1\}}))\psi\Vert^2\\  &\leq&   \frac{16}{\gamma^2 K^2}  \, \Vert\psi\Vert^2,
\end{eqnarray*}
where we have used that $\gamma/4\le \sin(\gamma/2)$ for each $0<\gamma<\pi$. Hence, for every $K \in\mathbb{N}$,
\begin{eqnarray*}
\Bigg\|{\frac{1}{K}} \displaystyle\sum_{j=0}^{K-1} U^j - P^U(\{1\}) \Bigg\|  \leq \frac{4}{\gamma K}.
\end{eqnarray*} 

\end{appendices}


\backmatter

\bmhead{Acknowledgments} We thank the anonymous referee for valuable suggestions that have substantially improved the exposition of the manuscript.

\bmhead{Funding} S.\ L.\ Carvalho thanks the partial support by FAPEMIG (Minas Gerais state agency; Universal Project, under contract 001/17/CEX-APQ-00352-17) and C.\ R.\ de Oliveira thanks the partial support by CNPq (a Brazilian government agency, under contract 303689/2021-8).

\end{document}